\documentclass[12pt]{amsart}
\usepackage{amscd, amssymb,latexsym,amsmath, amscd, amsmath}
\usepackage{amsthm}
\usepackage{amstext}
\usepackage{anysize}
\usepackage[sc]{mathpazo} 
\usepackage{enumitem}
\usepackage{pst-node}
\marginsize{2.5cm}{2.5cm}{2.5cm}{2.5cm}
\linespread{1.05}
\usepackage{graphics}
\usepackage{color}
\usepackage{tikz}
\usepackage{tikz-cd}

\DeclareFontFamily{U}{wncy}{}
    \DeclareFontShape{U}{wncy}{m}{n}{<->wncyr10}{}
    \DeclareSymbolFont{mcy}{U}{wncy}{m}{n}
    \DeclareMathSymbol{\Sh}{\mathord}{mcy}{"58}

\begin{document}
\title{Reducible principal series representations, and Langlands parameters for real groups}
\author{Dipendra Prasad}
\maketitle
{\hfill \today}

\vspace{1cm}
%%    THEOREMS, PROPOSITIONS, LEMMAS, COROLLARIES

\theoremstyle{plain}

\newtheorem{theorem}{Theorem}
\newtheorem{proposition}{Proposition}
\newtheorem{conjecture}{Conjecture}
\newtheorem*{thm}{Theorem}
\newtheorem{cor}{Corollary}
\newtheorem{lemma}{Lemma}

%%    DEFINITIONS, CONJECTURES, EXAMPLES, NOTATIONS
%\newtheorem{conjecture}[theorem]{Conjecture}

%%    REMARKS, QUESTIONS, NOTES, ETC

\theoremstyle{definition}
\newtheorem{remark}{Remark}
\newtheorem{definition}{Definition}
\newtheorem{question}{Question}
\newtheorem{example}{Example}
%\numberwithin{theorem}{section}
%\numberwithin{equation}{section}

\def\wG{{\widehat{G}}}
\def\wT{{\widehat{T}}}
\def\S{{\mathbb S}}
\def\A{{\mathbb A}}
\def\C{{\mathbb C}}                               % complex numbers
\def\F{{\mathbb F}}                               % finite field
\def\Ga{{\mathbb G}_a}                            % additive one-dimensional group
\def\Gm{{\mathbb G}_m}                            % multiplicative one-dimensional group
\def\H{{\mathbb H}}                               % upper half plane
\def\K{{\mathbb K}}
\def\L{\mathcal L}
\def\M{{\mathbb M}}                               % matrix algebra
\def\N{{\mathbb N}}                               % natural numbers
\def\O{{\mathcal O}}
\def\Q{{\mathbb Q}}                               % rational numbers
\def\R{{\mathbb R}}                               % real numbers
\def\Z{{\mathbb Z}}                               % integers
\def\PP{{\mathbb P}}                               % integers

\def\k{{\overline{k}}}                            % algebraic closure of k
\def\m{\mathfrak{m}}
\def\p{\mathfrak{p}}
\def\Aut{\operatorname{Aut}}                      % automorphisms
\def\End{{\operatorname{End}}}                    % endomorphisms 
\def\GL{\operatorname{GL}}                        % general linear group
\def\GSO{\operatorname{GSO}}                        % general linear group
\def\GSp{\operatorname{GSp}}                        % general linear group
\def\Gal{\operatorname{Gal}}                      % Galois group
\def\Hom{\operatorname{Hom}}                      % hom functor
\def\Ind{\operatorname{Ind}}
\def\ind{\operatorname{ind}}

\def\P{{\mathbb{P}}}
\def\PSL{\operatorname{PSL}}                      % projective special linear group
\def\PSO{\operatorname{PSO}}                      % projective special orthogonal group
\def\PSU{\operatorname{PSU}}                      % projective special unitary group
\def\PGL{\operatorname{PGL}}                      % projective symplectic group
\def\PSp{\operatorname{PSp}}                      % projective symplectic group
\def\Rad{\mathfrak{Rad}}
\def\Rep{\mathfrak{Rep}}
\def\Res{\operatorname{Res}}
\def\SL{\operatorname{SL}}           
\def\alg{\operatorname{alg}}           
\def\SL{\operatorname{SL}}                        % special linear group
\def\SO{\operatorname{SO}}                        % special orthogonal group
\def\SU{\operatorname{SU}}                        % special unitary group
\def\Sp{\operatorname{Sp}}                        % symplectic group
\def\Spin{\operatorname{Spin}}                    % spin group
\def\Sym{\operatorname{Sym}}
\def\Sel{\operatorname{Sel}}
\def\Or{\operatorname{O}}                         % orthogonal group
\def\Un{\operatorname{U}}                         % unitary group
\def\Ps{\operatorname{Ps}}                         % unitary group

\def\Fr{\operatorname{Fr}}                         % unitary group
\def\det{\operatorname{det}}                      % determinant
\def\tr{\operatorname{tr}}                        % trace
\def\into{{~\hookrightarrow~}}                    % injective map
\def\iso{{\stackrel{\sim}{~\longrightarrow~}}}    % isomorphism
\def\lra{{~\longrightarrow~}}                     % 
\def\onto{{~\twoheadrightarrow~}}                 % surjective map
\def\op{{\oplus}}                                 % direct sum
\def\ot{{\otimes}}                                % tensor
\def\rad{\mathfrak{rad}}                          % 
\def\sdp{{~\mathbin{{\triangleright}\!{<}}}}      % semidirect product
\def\siso{{\stackrel{\sim}{~\rightarrow~}}}       % isomorphism (with a space)

\begin{abstract}
The work of Bernstein-Zelevinsky and Zelevinsky gives a good understanding of irreducible subquotients of 
a reducible principal series representation of $\GL_n(F)$, $F$ a $p$-adic field induced from an essentially discrete series representation of a parabolic subgroup (without specifying their multiplicities  which is done by  a Kazhdan-Lusztig type conjecture). In this paper we make a proposal 
of a similar kind for principal series representations of $\GL_n(\R)$. Our investigation on principal 
series representations naturally led us to consider the Steinberg representation for real groups, which has curiuosly 
not been paid much attention to in the subject (unlike the $p$-adic case). Our proposal for Steinberg is the 
simplest possible: for a real reductive group $G$, the Steinberg of $G(\R)$ 
is a discrete series representation if and only if $G(\R)$ has a discrete series, and makes up a full $L$-packet 
of representations of $G(\R)$ (of size $W_G/W_K$), so is typically not irreducible.
\end{abstract}

\tableofcontents

\section{Introduction}
Although irreducible representations of $\GL_n(\R)$ and $\GL_n(\C)$ are well understood, and so are their Langlands 
parameters, the author has not found a place which discusses Langlands parameters of irreducible subquotients of 
principal series representations on these groups. 
In fact, there is no explicit reference relating the Langlands parameters of 
the two components, one finite dimensional, and the other a discrete series representation of $\GL_2(\R)$
which appear inside a reducible principal series representation of $\GL_2(\R)$. The paper was conceived 
in the hope that this simple question may shed some light on possible Langlands parameters of reducible principal
series representations of $\GL_n(\R)$, or even more generally for $G(\R)$, for $G$ a reductive group over $\R$, and how such 
questions on real and $p$-adic groups may be related. It is well-known that reducibility of non-unitary 
principal series of $G(F)$, $F$ a $p$-adic field, is intimately connected with Langlands parameters involving the Weil-Deligne group. For example,
there is the well-known conjecture (usually attributed to Tom Haines) that for $F$ a $p$-adic field, 
the Langlands parameters of 
all the subquotients of a principal series representation induced from a cuspidal representation,
have the same restriction to $W_F$ when $W_F$ is embedded in $W'_F=W_F \times \SL_2(\C)$ in such a way that the
homomorphism of $W_F$ into $\SL_2(\C)$ lands inside the diagonal subgroup, and is the pair of characters $(\nu^{1/2},\nu^{-1/2})$ where $\nu: F^\times \rightarrow \C^\times$ is the normalized absolute value of $F^\times$, treated also as 
a character of $W_F$.

As an introduction to Langlands parameters for $\GL_n(\R)$, we recall that $\GL_n(\R)$ has a discrete series representation if and only if $n \leq 2$. Further, any tempered representation of $\GL_n(\R)$ is irreducibly induced from a (unitary) discrete series representation of a Levi subgroup. Thus the Langlands parameter $\sigma_\pi$ of any irreducible admissible representation $\pi$ of $\GL_n(\R)$ is of the form:
$$\sigma_\pi= \sum \sigma_i,$$ 
where $\sigma_i$ are irreducible representations of $W_\R$ of dimension $\leq 2$. Further, the map $\pi \rightarrow \sigma_\pi $ is a bijective correspondence between irreducible admissible representations of $\GL_n(\R)$ and (semi-simple) 
representations of $W_\R$ of dimension $n$.

We now begin with $\GL_2(\R)$. The following well-known proposition (in this form) is due to Jacquet-Langlands; in this, and in the rest of the paper, we denote by $\omega_\R$, the unique quadratic character of $\R^\times$.

\begin{proposition}For a pair of characters $\chi_1,\chi_2:\R^\times \rightarrow \C^\times$, let $Ps(\chi_1,\chi_2) = \chi_1 \times \chi_2$ be the corresponding
principal series representation of $\GL_2(\R)$. Then this principal series representation is reducible if and only if 
$\chi_1\cdot \chi_2^{-1}(t) =t^p\omega_\R(t)$ for some integer $p\not = 0$.

If the principal series representation is reducible,  it has two Jordan-H\"older factors, one finite dimensional (of dimension $|p|$), 
and the other infinite dimensional which
is a discrete series representation of $\GL_2(\R)$ by which we always mean up to a twist, or what's also called `an essentially discrete series representation'. 
\end{proposition}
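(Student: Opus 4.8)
The plan is to pass to the underlying Harish-Chandra module and read off both the reducibility criterion and the constituents from the ladder of $K$-types, with $K=\Or(2)$. First I would write the characters explicitly as $\chi_i(t)=|t|^{s_i}\operatorname{sgn}(t)^{m_i}$ with $s_i\in\C$ and $m_i\in\{0,1\}$, so that $\chi_1\chi_2^{-1}(t)=|t|^{s_1-s_2}\operatorname{sgn}(t)^{m_1-m_2}$ while $t^p\omega_\R(t)=|t|^{p}\operatorname{sgn}(t)^{p+1}$. Thus the asserted condition reads $p:=s_1-s_2\in\Z\setminus\{0\}$ together with the parity constraint $m_1-m_2\equiv p+1\pmod 2$; the extra $+1$, equivalently the factor $\omega_\R$, is the one genuinely subtle point and I expect it to emerge from the lowest $K$-type analysis below. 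Since twisting $Ps(\chi_1,\chi_2)$ by a character of $\det$ alters neither reducibility nor the ratio $\chi_1/\chi_2$, I would normalize, say, $s_1+s_2=0$ to lighten notation.

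Next I would restrict the principal series to $K=\Or(2)$. By Mackey/Frobenius reciprocity its $K$-types are exactly the two-dimensional representations $\tau_n$ of $\Or(2)$ (rotation by $n\theta$ together with a reflection), with $n\geq 1$ and $n\equiv m_1+m_2\pmod 2$, together with a single one-dimensional character of $\Or(2)$ at level $n=0$ when $m_1+m_2$ is even; each occurs with multiplicity one. The decisive structural feature, and the reason the final count is two rather than three, is that because $\Or(2)$ is disconnected and $\tau_n\cong\tau_{-n}$, this ladder is \emph{semi-infinite}: $\tau_{n_0},\tau_{n_0+2},\tau_{n_0+4},\dots$ with $n_0\in\{0,1\}$. (For $\SL_2(\R)$ the analogous $\SO(2)$-ladder is bi-infinite and breaks at two symmetric points, producing the familiar three constituents; passing to the full orthogonal group is precisely what folds the two discrete series into one.)

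I would then compute the action of the raising and lowering operators spanning $\mathfrak{p}=\mathfrak{gl}_2(\C)/\mathfrak{k}$, which shift $n$ by $\pm 2$ and, on the line $\tau_n$, act by explicit scalars affine in $n$ with the $s_i$ as parameters. Reducibility is equivalent to the vanishing of one such scalar at some $n$ occurring in the ladder: when all scalars are nonzero the operators chain every $K$-type to its neighbours and any nonzero submodule is everything, giving irreducibility; when exactly one scalar vanishes, the ladder breaks at the junction into precisely two invariant pieces (a non-split extension, but of length two). A direct inspection of these scalars shows the vanishing point lies inside the semi-infinite ladder exactly when $s_1-s_2$ is a nonzero integer of the parity dictated by $n_0\equiv m_1+m_2\pmod 2$, and occurs at a single place; this yields the claimed reducibility criterion and exactly two Jordan--H\"older factors, with the finite segment appearing as a subrepresentation or a quotient according to the sign of $p$.

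Finally I would identify the two pieces by their $K$-types. The finite segment consists of the types $\tau_{n_0},\tau_{n_0+2},\dots,\tau_{|p|-1}$ (with the level-$0$ character included when $n_0=0$), which are precisely the $\Or(2)$-types of $\Sym^{|p|-1}(\C^2)$ up to the $\det$-twist fixed by the central character; counting weight lines gives dimension $|p|$, and the fit forces $|p|-1\equiv n_0\equiv m_1+m_2\pmod 2$, i.e. the parity $m_1-m_2\equiv p+1$ — this is exactly where $\omega_\R$ appears. The complementary segment $\tau_{|p|+1},\tau_{|p|+3},\dots$ is the infinite-dimensional constituent: having lowest $K$-type $\tau_{|p|+1}$, a semi-infinite string of $K$-types of fixed parity each with multiplicity one, and the matching infinitesimal and central character, it is (after a suitable twist) square-integrable, hence the essentially discrete series representation of $\GL_2(\R)$ of weight $|p|+1$. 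The main obstacle, and the step meriting the most care, is the combinatorics of this semi-infinite $\Or(2)$-ladder: verifying that exactly one operator coefficient degenerates, so that there are two constituents and not one or three, and tracking the parity shift responsible for $\omega_\R$.
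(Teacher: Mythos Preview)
Your approach is sound and is essentially the classical $(\mathfrak{g},K)$-module analysis of the $\GL_2(\R)$ principal series: restrict to $K=\Or(2)$, obtain the multiplicity-one semi-infinite ladder of $K$-types $\tau_n$ with $n\equiv m_1+m_2\pmod 2$, compute the raising/lowering scalars in $\mathfrak{p}_\C$, and read off reducibility from the vanishing of exactly one such scalar. The parity bookkeeping you outline, leading to $m_1-m_2\equiv p+1\pmod 2$ and hence the factor $\omega_\R$, is correct, as is the identification of the finite piece with $\Sym^{|p|-1}(\C^2)$ (up to a $\det$-twist) and of the infinite piece with the essentially discrete series of lowest $K$-type $\tau_{|p|+1}$.

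There is, however, nothing to compare against: the paper does not supply a proof of this proposition. It records the statement as well known and attributes it to Jacquet--Langlands, then moves on. So your argument is not ``the same'' or ``different'' from the paper's; it simply fills in what the paper takes for granted. If you want a reference in the same spirit as your write-up, the computation of the ladder and the raising/lowering scalars is carried out in many standard sources (e.g.\ Lang's $\SL_2(\R)$ or Bump's \emph{Automorphic Forms and Representations}); the passage from $\SL_2(\R)$ to $\GL_2(\R)$, folding the bi-infinite $\SO(2)$-ladder into a semi-infinite $\Or(2)$-ladder and thereby reducing three constituents to two, is exactly the point you highlight and is the one step worth writing out carefully.
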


\begin{cor} (a) For characters 
$\chi_1,\chi_2:\R^\times \rightarrow \C^\times$, with $\chi_1/\chi_2(t)=t^p$ for $t>0$ with $p\not = 0$, 
exactly one of the principal series  $\chi_1 \times \chi_2$ or $\chi_1 \times \chi_2 \omega_\R$ of $\GL_2(\R)$ is reducible,
and the other irreducible.

(b) For characters $\chi_1,\chi_2,\chi_3:\R^\times \rightarrow \C^\times$, 
among the three principal series  representations $\chi_1 \times \chi_2$, $\chi_1 \times \chi_3 $, $\chi_2 \times \chi_3$
of $\GL_2(\R)$, at least one of them is irreducible.
\end{cor}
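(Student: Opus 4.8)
The plan is to reduce both statements to a single $\Z/2$-valued invariant attached to each character, read off from the reducibility criterion of the Proposition. Every character $\chi$ of $\R^\times$ can be written uniquely as $\chi(t)=|t|^{s(\chi)}\omega_\R(t)^{\epsilon(\chi)}$ with $s(\chi)\in\C$ and $\epsilon(\chi)\in\{0,1\}$. First I would rewrite the reducibility condition $\chi_1\chi_2^{-1}(t)=t^p\omega_\R(t)$ in these coordinates. The one point requiring care is that for negative $t$ one has $t^p=|t|^p\omega_\R(t)^p$, so $t^p\omega_\R(t)=|t|^p\omega_\R(t)^{p+1}$; hence the criterion becomes: $s(\chi_1)-s(\chi_2)$ is a nonzero integer $p$, and $\epsilon(\chi_1)-\epsilon(\chi_2)\equiv p+1 \pmod 2$. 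Setting $a(\chi):=s(\chi)+\epsilon(\chi)$, a short computation shows these two requirements say precisely that $s(\chi_1)-s(\chi_2)\in\Z\setminus\{0\}$ and that $a(\chi_1)-a(\chi_2)$ is odd. Although $a(\chi)$ is a complex number in general, the difference $a(\chi_1)-a(\chi_2)$ is an integer exactly when $s(\chi_1)-s(\chi_2)\in\Z$, which is precisely the regime where reducibility can occur, and in that case its parity is the relevant invariant. I expect this translation step --- pinning down the parity bookkeeping produced by $t^p$ on the negative axis --- to be the only genuinely delicate part; everything after it is combinatorics in $\Z/2$.

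For part (a), the hypothesis $\chi_1/\chi_2(t)=t^p$ on $t>0$ says $s(\chi_1)-s(\chi_2)=p$, a nonzero integer, and this integrality persists when $\chi_2$ is replaced by $\chi_2\omega_\R$, since twisting by $\omega_\R$ leaves the $s$-part unchanged and only flips the $\epsilon$-part. Thus for both $\chi_1\times\chi_2$ and $\chi_1\times\chi_2\omega_\R$ the integrality half of the criterion is satisfied, so reducibility of each is governed solely by the parity of $a(\chi_1)-a(\chi_2)$ respectively of $a(\chi_1)-a(\chi_2\omega_\R)=a(\chi_1)-a(\chi_2)-1$. These two parities are opposite, so exactly one of the two principal series is reducible and the other irreducible.

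For part (b), I would argue by contradiction using pigeonhole in $\Z/2$. Suppose all three of $\chi_1\times\chi_2$, $\chi_1\times\chi_3$, $\chi_2\times\chi_3$ were reducible. Then in particular all three pairwise $s$-differences are integers, so each difference $a(\chi_i)-a(\chi_j)$ is a well-defined integer, and reducibility forces each of them to be odd; that is, $a(\chi_1),a(\chi_2),a(\chi_3)$ would be pairwise distinct in $\Z/2$, which is impossible since $\Z/2$ has only two elements. Concretely, the telescoping identity $a(\chi_1)-a(\chi_3)=\bigl(a(\chi_1)-a(\chi_2)\bigr)+\bigl(a(\chi_2)-a(\chi_3)\bigr)$ forces $a(\chi_1)-a(\chi_3)$ to be even once the other two differences are odd, contradicting reducibility of $\chi_1\times\chi_3$. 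Hence at least one of the three principal series must be irreducible.
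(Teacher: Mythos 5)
Your argument is correct, and it is essentially the intended one: the paper states this Corollary without proof as an immediate consequence of Proposition 1, and your $\Z/2$-bookkeeping (reducibility $\Leftrightarrow$ the $s$-difference is a nonzero integer and $a(\chi_1)-a(\chi_2)$ is odd, followed by the parity flip for (a) and the telescoping parity contradiction for (b)) is exactly the careful working-out of that consequence. The only cosmetic point is that in (b) the phrase ``pairwise distinct in $\Z/2$'' is loose since $a(\chi)$ itself is only a complex number, but your telescoping identity makes the step rigorous anyway.
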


\begin{remark} One can reformulate this proposition, or more generally the work of Speh on reducibility of principal series representations of
$\GL_n(\R)$ and $\GL_n(\C)$, see [Sp] as well as [Mo],
so that it applies uniformly to all $\GL_n(F)$, $F$ any local field. There is the conjecture 2.6 formulated in [GP] on 
when a representation $\pi$ is generic in terms of $L(s ,\pi, {\rm Ad}({\mathfrak g}))$ having no poles at $s=1$. Since the $L$-functions at archimedean places involve
Gamma functions, say $\Gamma(s/2)$, which has poles for all $s$ an even integer $s=2d \leq 0$, this is responsible for infinitely many reducibility
points for principal series representations of $\GL_2(\R)$, unlike the unique reducibilty point for $\GL_2(F)$, $F$ archimedean. Known reducibility points for  $\GL_n(\R)$ and $\GL_n(\C)$ 
were some of the initial examples which went into the formulation of Conjecture 2.6 of [GP].
\end{remark}

Recall that $W_{\R}= \C^\times \cdot \langle j \rangle$ 
with $j^2 =-1, jzj^{-1} = \bar{z}$ for $z \in \C^\times$.  Since $W_\R^{\rm ab} \cong \R^\times$ in which the map from $W_\R$ to $\R^\times$ 
when restricted to $\C^\times \subset W_\R$ is just the norm mapping from $\C^\times$ to $\R^\times$, characters of $W_\R \rightarrow \C^\times$ can therefore be identified to
characters of $\R^\times$.

Note that the characters of $\R^\times$ are of the form $\omega_\R^{\{0,1\}}(t) |t|^s$ where $s \in \C^\times$.   On the other hand, characters of $\C^\times$
can be written as,
$$z \rightarrow z^\mu \cdot \bar{z}^{\nu} = 
z^{\mu -\nu}\cdot  (z \bar{z})^\nu, \quad \mu,\nu \in \C, \mu -\nu \in \Z. $$

\begin{proposition}For a pair of characters $\chi_1,\chi_2:\R^\times \rightarrow \C^\times$ with 
$$\chi_1(t) = t^\mu,\quad  \chi_2(t) = t^{\nu}, \quad t > 0, \quad {\rm and} \quad \mu,\nu \in \C,$$
and with $\chi_1\cdot \chi_2^{-1}(t) =t^p\omega_\R(t)$ for  $p=\mu-\nu \not = 0$, an integer, let $Ps(\chi_1,\chi_2)$ 
be the corresponding
principal series representation of $\GL_2(\R)$.  Let  $F(\chi_1,\chi_2)$ be the finite dimensional subquotient of 
$Ps(\chi_1,\chi_2)$, and $Ds(\chi_1,\chi_2)$ the essentially discrete series component. Then the Langlands parameter of $F(\chi_1,\chi_2)$ 
is $\chi_1+ \chi_2$, and that of $Ds(\chi_1,\chi_2)$ is the induction to $W_\R$ of the following character of  $\C^\times \subset W_\R$:
$$z \longrightarrow z^\mu \bar{z}^\nu = 
z^{\mu -\nu}\cdot  (z \bar{z})^\nu.$$
\end{proposition}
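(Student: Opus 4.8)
The plan is to treat the two Jordan--H\"older factors separately, using that Proposition 1 already tells us $F(\chi_1,\chi_2)$ is finite dimensional while $Ds(\chi_1,\chi_2)$ is essentially discrete series. The two inputs I would rely on are classical features of the local Langlands correspondence for $\GL_2(\R)$: first, that it is compatible with the Langlands quotient construction, so that the parameter of the irreducible quotient of a standard module $\chi_1 \times \chi_2$ is the direct sum $\chi_1 \oplus \chi_2$ of the inducing characters, viewed as characters of $W_\R$ through $W_\R^{\rm ab} \cong \R^\times$; and second, that the essentially discrete series representations of $\GL_2(\R)$ are precisely those whose parameters are the two-dimensional \emph{irreducible} representations of $W_\R$, namely the inductions $\Ind_{\C^\times}^{W_\R}(\psi)$ of a character $\psi$ of $\C^\times$ with $\psi \neq \psi^j$, where $\psi^j(z) = \psi(\bar z)$.

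For the finite-dimensional factor I would order the inducing characters so that $F(\chi_1,\chi_2)$ is the Langlands quotient of $\chi_1 \times \chi_2$; since the target parameter is symmetric in $\chi_1,\chi_2$ the choice of ordering is immaterial. Compatibility of the correspondence with the Langlands quotient then gives the parameter of $F(\chi_1,\chi_2)$ as $\chi_1 \oplus \chi_2$ directly. As a consistency check one verifies that the determinant of $\chi_1 \oplus \chi_2$ equals $\chi_1\chi_2$, the central character shared by all subquotients of $Ps(\chi_1,\chi_2)$.

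For the discrete series factor, Proposition 1 guarantees that $Ds(\chi_1,\chi_2)$ is essentially discrete series, so its parameter is $\Ind_{\C^\times}^{W_\R}(\psi)$ for some $\psi(z) = z^a \bar z^b$ with $a-b \in \Z$ and $a \neq b$, and it remains to identify $\psi$. I would pin it down by the infinitesimal character: every subquotient of $\chi_1 \times \chi_2$ has infinitesimal character $\{\mu,\nu\}$, read off as the holomorphic exponents of the restriction of the parameter to $\C^\times \subset W_\R$, while the restriction of $\Ind_{\C^\times}^{W_\R}(\psi)$ to $\C^\times$ is $\psi \oplus \psi^j = (z^a\bar z^b) \oplus (z^b \bar z^a)$, with holomorphic exponents $\{a,b\}$. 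Hence $\{a,b\} = \{\mu,\nu\}$, and since $\Ind_{\C^\times}^{W_\R}(\psi) \cong \Ind_{\C^\times}^{W_\R}(\psi^j)$ the two solutions $z^\mu \bar z^\nu$ and $z^\nu \bar z^\mu$ give the same representation, which yields the asserted parameter. I would then confirm the one delicate point, the value at the image of $j$ in $W_\R^{\rm ab} = \R^\times$, by the explicit computation that the determinant of $\Ind_{\C^\times}^{W_\R}(\psi)$ there equals $-\psi(-1) = -(-1)^p = (-1)^{p+1}$, and check that this agrees with $\chi_1\chi_2(-1)$, which follows from the hypothesis $\chi_1\chi_2^{-1}(t) = t^p \omega_\R(t)$.

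The step I expect to be the real obstacle is not any individual representation-theoretic input, all of which are classical, but the bookkeeping needed to fix all normalizations simultaneously and, in particular, to remove the quadratic-twist ambiguity for $F(\chi_1,\chi_2)$. Indeed, the infinitesimal character together with the central character do \emph{not} determine the parameter of the finite-dimensional factor: both $\chi_1 \oplus \chi_2$ and $\chi_1\omega_\R \oplus \chi_2\omega_\R$ have infinitesimal character $\{\mu,\nu\}$ and determinant $\chi_1\chi_2$ (as $\omega_\R^2 = 1$), and these are exactly the parameters of the finite-dimensional factors of the two principal series appearing in Corollary 1(a). Distinguishing them is precisely what the Langlands quotient identification achieves, equivalently an explicit realization of $F(\chi_1,\chi_2)$ as a twist of $\Sym^{|p|-1}$ of the standard representation; by contrast the discrete series parameter is unambiguous, since among the semisimple two-dimensional representations of $W_\R$ with the given infinitesimal character there is a unique irreducible one.
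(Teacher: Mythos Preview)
Your proposal is correct and well organized. The paper takes a shorter, slightly different route: it uses the product decomposition $\GL_2(\R)=\SL_2(\R)^{\pm 1}\times\R^{>0}$ to strip off the positive-real central direction, and then simply quotes the known parameter $\Ind_{\C^\times}^{W_\R}\chi_n$ (with $\chi_n(re^{i\theta})=e^{i(n-1)\theta}$) for the lowest-weight-$n$ discrete series of $\SL_2(\R)^{\pm 1}$, leaving the finite-dimensional factor essentially implicit. Your argument instead invokes two structural features of the correspondence directly---Langlands-quotient compatibility for $F(\chi_1,\chi_2)$, and the infinitesimal-character constraint to single out the unique irreducible two-dimensional parameter for $Ds(\chi_1,\chi_2)$. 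The paper's reduction is quicker but assumes the reader already has the $\SL_2(\R)^{\pm 1}$ parameter in hand; your route is more self-contained, treats both Jordan--H\"older factors on an equal footing, and makes explicit why the discrete-series parameter suffers no $\omega_\R$-twist ambiguity (a point the paper relegates to the remarks following the proof).
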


\begin{proof}Observe that
  $\GL_2(\R) = \SL_2(\R)^{\pm 1} \times \R^{>0}$, where $\SL_2(\R)^{\pm 1}$ is the subgroup of $\GL_2(\R)$ 
consisting of matrices with determinant $\pm 1$. So any irreducible representation of $\GL_2(\R)$ restricted to $\SL_2(\R)^{\pm 1}$ 
remains irreducible, and the Langlands parameter of an  irreducible representation of $\GL_2(\R)$ can be read-off from that of  $\SL_2(\R)^{\pm 1}$.
It suffices then to note that the representation, say $\pi_n$, $n\geq 2$, of $\SL_2(\R)^{\pm 1}$ with lowest weight $n$ (extended to $\GL_2(\R)$ trivially across 
$\R^{>0}$) has Langlands parameter
which is the 2 dimensional irreducible representation of $W_\R$ given by
${\rm Ind}_{\C^\times}^{ W_\R} \chi_n$ 
where  $\chi_n$ is the character $ z = r e^{i \theta}\stackrel{\chi_n}\rightarrow e^{(n -1)i \theta}$. 
\end{proof}   
\begin{remark} For a discrete series representation $D$ of $\GL_2(\R)$, $D \otimes \omega_\R \cong D$.Therefore if a 
principal series representation 
$\chi_1 \times \chi_2$ of $\GL_2(\R)$ is reducible, and  
is $F+ D$ up to semi-simplification, where $F$ is finite dimensional and $D$ is a discrete series representation of $\GL_2(\R)$, then the principal series representation 
$\chi_1\omega_\R \times \chi_2\omega_\R$ of $\GL_2(\R)$ is also reducible, and is 
up to semi-simplification $\omega_\R F+ D$; i.e., a discrete series representation 
lies in two distinct principal series representations of $\GL_2(\R)$, whereas a finite dimensional 
representation lies in only one; this is in marked contrast with $p$-adics! (This difference between  reals and $p$-adics is at the source of not having a theory of `cuspidal supports' for $\GL_n(\R)$ although something slightly weaker given by 
{\it infinitesimal character} is there: thus if two principal series representations $\chi_1\times \chi_2 \times \cdots 
\times \chi_n$ and $\lambda_1 \times \lambda_2 \times \cdots \times \lambda_n$ of $\GL_n(\R)$ have a common irreducible 
subquotient, then $\lambda_i$ and $\chi_i$ are the same up to a permutation and possible multiplication by $\omega_\R$.)
\end{remark}

\begin{remark}Since $p$  is nonzero in the proposition, the character $z \longrightarrow z^\mu \bar{z}^\nu$ 
of $\C^\times$ induces an irreducible 2-dimensional
representation of $W_\R$.
Further, a well-known
property of Langlands parameter of representations of $\GL_n(\R)$ 
is that its determinant is the central character of the representation. This  holds in our case by the following
calculation on the
determinant of an induced representation 
$${\rm det} \left [{\rm Ind}_{\C^\times}^{W_\R} \chi 
\right ] = \chi|_{\R^\times} \cdot \omega_\R = t^{\mu + \nu}\omega_\R(t)= t^{\mu -\nu}(t^2)^\nu \omega_\R(t)=t^p(t^2)^\nu \omega_\R=
\chi_1(t)\cdot \chi_2(t),$$
where in the last equality we used $\chi_1\chi_2^{-1}(t) = t^p \omega_\R(t)$. 
\end{remark}

\section{Notion of Segments for $\GL_n(\R)$}  

We introduce the  language of segments 
for $\GL_n(\R)$
which 
will play a role similar to the language of segments for $\GL_n(F)$, $F$ a $p$-adic field, due to 
Zelevinsky,  for reducibility questions for representations of $\GL_n(\R)$ parabolically 
induced from essentially discrete series representations. 
We will introduce the  language of segments only for essentially discrete series representations of $\GL_n(\R)$ (so $n\leq 2$); one could extend the notion of segments to irreducible representations of $\GL_n(\R)$ for all $n$ using Langlands quotient theorem.
For this, we associate 
to the (essentially) 
discrete series representation $\pi_{\mu,\nu}$ of $\GL_2(\R)$ whose Langlands parameter restricted to $\C^\times$ contains the character
$z\rightarrow z^\mu \bar{z}^\nu$ with $\mu -\nu \in \N$, the segment $[\nu, \nu+1,\nu+2, \cdots,  \mu]$, written as $[\nu,\mu]$. 
We will denote the corresponding finite dimensional representation of $\GL_2(\R)$ by $F_{\mu,\nu}$. The discrete series 
representation $[\nu,\mu]$ is a subquotient of a principal series representation $\chi_1 \times \chi_2$ 
for an appropriate choice of characters $\chi_1,\chi_2:\R^\times \rightarrow \C^\times$ with $\chi_1(t)=t^\nu$ and
$\chi_2(t)=t^\mu$ for $t>0$.

If the discrete series appears in the reducible principal series 
$\nu^{s_j}(\nu^{p_j/2} \times \nu^{-p_j/2}\omega^{p_j+1}_\R)$ of $\GL_2(\R)$, 
the segment associated above is $s_j+I_j=[s_j-p_j/2, s_j-p_j/2+1 , \cdots, s_j+p_j/2 -1,   s_j+p_j/2]$ (the segment $I_j$ 
consists of integer translates of half-integers).

To the representation $\sigma_i\nu^{s_i}$ of $\R^\times$ where $\sigma_i$ is a character of order 1 or 2, 
we associate the segment $s_i+I_i$ with $I_i=0$ which is also to be denoted as $[s_i]$. This notation $[s_i]$ for a character
of $\R^\times$ determines the character only on $\R^+$, the rest left ambiguous.

It may be noted that we have segments of 
arbitrary length already for $\GL_2(\R)$, whereas for $F$ a $p$-adic field, 
to have segments of arbitrary length, we must 
have $\GL_m(F)$ for $m$ large enough; one may add that for Bernstein-Zelevinsky, a segment is `real' in that all the 
numbers in the segment correspond to representations, whereas a segment for $\GL_2(\R)$ has only end points which are meaningfully related to representations.

\section{The Steinberg representation for real groups} There is an  analogy between the representation 
$D_2: W_\R\rightarrow \SL_2(\C)$ associated to the lowest discrete series representation of $\PGL_2(\R)$, 
and the defining representation of $\SL_2(\C)$ of dimension 2 in the Deligne part of the 
Weil-Deligne  group $W'_F= W_F \times \SL_2(\C)$ for $F$ a non-archimedean local field. For this, consider 
the Steinberg representation $St_G$
of $G(\R)$ obtained on functions on $G(\R)/B$ for $B$ a minimal parabolic in $G$ modulo functions on $G(\R)/Q$ for $Q$ parabolics of $G$ strictly containing $B$.
It is known that $St_G$ is a tempered representation of $G(\R)$ of finite length which may not be irreducible unlike for $p$-adic fields as is the case already for $\SL_2(\R)$. 
If $F$ is a non-archimedean local field, then the Steinberg representation is an irreducible discrete series representation of 
$G(F)$, and is supposed to have the Langlands parameter $\imath_G:\SL_2(\C) \times W_F \rightarrow \widehat{G}(\C)$ which is trivial on $W_F$, and which on $\SL_2(\C)$ is associated by the Jacobson-Morozov theorem 
 to the regular unipotent element in the dual group $\widehat{G}(\C)$ 
(called the principal $\SL_2(\C)$) which is also a 
regular unipotent element in $\widehat{G}(\C)^{W_F}$ .

\begin{conjecture}
Let $G$ be a real reductive group, and $St_G$ the Steinberg representation of $G(\R)$. Then $St_G$ is a direct sum of irreducible tempered representations which makes up a full $L$-packet on $G(\R)$.

Let 
$D_2: W_\R\rightarrow \SL_2(\C)$ be the Langlands parameter 
associated to the lowest discrete series representation of $\PGL_2(\R)$. Let $\imath_G: \SL_2(\C)  \rightarrow {}^L G$ 
be the Jacobson-Morozov map
associated to a regular unipotent element in $\widehat{G}$.  
Then the Langlands parameter of $St_G$ is $\imath_G \circ D_2: 
W_\R \rightarrow {}^LG$. 
\end{conjecture}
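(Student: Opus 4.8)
The plan is to determine $St_G$ through its Harish-Chandra character and to match that character with the stable sum of the discrete series $L$-packet whose infinitesimal character is $\rho$, the half sum of positive roots of $G$. First I would record the soft inputs. Writing the defining filtration of $St_G$ as an alternating sum in the Grothendieck group, $[St_G]=\sum_{Q\supseteq B}(-1)^{\,r-\dim(A_Q/A_G)}[\Ind_Q^G\mathbf 1]$, where $r$ is the split rank of $G$ modulo its centre and $\Ind$ is normalized induction, every term is a constituent of the principal series induced from the trivial character of $B$; hence $St_G$ has the same infinitesimal character as the trivial representation, namely $\rho$, which is regular because it is strictly dominant. Temperedness and finite length of $St_G$ are quoted as known. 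Consequently $[St_G]$ lies in the $\Z_{\geq 0}$-span of the irreducible tempered representations with infinitesimal character $\rho$.

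The key computation is on the regular elliptic set. The character of any $\Ind_Q^G(\cdot)$ with $Q\neq G$ vanishes on regular elliptic elements, since a compact-mod-centre maximal torus embeds in no proper parabolic; hence in the alternating sum only the term $Q=G$, i.e. the trivial representation, survives, giving $\Theta_{St_G}\big|_{\mathrm{ell}}=(-1)^r$. On the other hand, the stable sum $\sum_{\pi\in\Pi_\rho}\Theta_\pi$ over the discrete series packet $\Pi_\rho$ of infinitesimal character $\rho$ is, by Harish-Chandra's discrete series character formula, equal on a regular elliptic $t$ in a compact maximal torus $T$ to $\pm\big(\sum_{w\in W}\det(w)\,\xi_{w\rho}(t)\big)/\prod_{\alpha>0}\big(\xi_{\alpha/2}(t)-\xi_{-\alpha/2}(t)\big)$, where $\xi_\lambda$ is the character of $T$ with differential $\lambda$. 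Because the infinitesimal character is exactly $\rho$, the Weyl denominator formula collapses the numerator to the denominator, so this stable character is the constant $\pm1$ on the elliptic set, matching the Steinberg computation. Since discrete series characters are linearly independent on the regular elliptic set, this forces each member of the full packet $\Pi_\rho$ to occur in $St_G$ with multiplicity one.

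It remains to exclude non-elliptic tempered constituents and to treat groups without discrete series; this I expect to be the main obstacle, since such constituents are invisible on the elliptic set. Here I would exploit the regularity and reality of $\rho$: every irreducible tempered representation is a constituent of $\Ind_P^G(\sigma\otimes\nu)$ with $\sigma$ a discrete series of a Levi $M$ and $\nu\in i\mathfrak{a}^*$, and its infinitesimal character has imaginary part $\nu$, whereas $\rho$ is totally real; as $W$ preserves the real/imaginary decomposition of $\mathfrak{h}^*$, only $\nu=0$ can contribute, and one must then check that the residual case $\nu=0$ produces a singular (hence $\neq\rho$) infinitesimal character unless $P=G$. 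Equivalently, one argues that the representations of infinitesimal character $\rho$ form the principal block, whose tempered members are exactly the discrete series in $\Pi_\rho$. When $G(\R)$ has no discrete series, the same construction applied to the fundamental parabolic $P_f=M_fN_f$ (with $M_f$ carrying a compact-mod-centre maximal torus) is compatible with parabolic induction and identifies $St_G$ with $\Ind_{P_f}^G(St_{M_f})$, reducing both the decomposition and the parameter statement to the discrete series case for $M_f$; this simultaneously explains why $St_G$ is a discrete series exactly when $G(\R)$ has one.

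Finally I would verify the parameter. Put $\phi=\imath_G\circ D_2$. As computed for the earlier proposition, $D_2$ restricted to $\C^\times\subset W_\R$ is $z\mapsto\mathrm{diag}\big((z/\bar z)^{1/2},(z/\bar z)^{-1/2}\big)$, and the principal homomorphism sends $\mathrm{diag}(a,a^{-1})$ to $a^{\,2\rho^\vee_{\widehat G}}$, where $2\rho^\vee_{\widehat G}=\sum_{\alpha>0}\alpha^\vee$; under $X_*(\widehat T)=X^*(T)$ this exponent is $\rho_G$, so $\phi$ has infinitesimal character $\rho$, matching $St_G$. The image of the principal $\SL_2(\C)$ meets no relevant proper Levi of ${}^LG$ precisely when the regular unipotent of $\widehat G$ remains regular in the centralizer cut out by $\phi(j)$, which is exactly the condition that $G(\R)$ admit discrete series; in that case $\phi$ is the discrete series parameter of infinitesimal character $\rho$, whose $L$-packet of size $|W_G/W_K|$ is, by the preceding paragraphs, $St_G$. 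When $G(\R)$ has no discrete series, $\phi$ factors through the fundamental Levi and is matched with $\Ind_{P_f}^G(St_{M_f})$, consistently with the reduction above, which completes the plan.
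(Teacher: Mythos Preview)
The statement is labelled a \emph{conjecture} in the paper and is not proved there in general. What the paper does establish is (i) the special case $G=\GL_n(\R)$, worked out in Example~1, and (ii) the auxiliary Proposition~3 that $\imath_G\circ D_2$ is a discrete parameter if and only if $G(\R)$ has discrete series.

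For (i) your approach is entirely different from the paper's. The paper argues by induction on $n$ in steps of two, using two features special to $\GL_n$: a discrete series of $\GL_2(\R)$ sits in two distinct principal series (differing by a twist by $\omega_\R$), and any principal series has a unique generic subquotient. By moving between these two principal series one shows inductively that the tempered representation with parameter $\Sym^{n-1}(D_2)$ is the generic constituent of $\nu^{-(n-1)/2}\times\cdots\times\nu^{(n-1)/2}$, hence is the Steinberg. This is elementary but confined to $\GL_n(\R)$, where $St_G$ is irreducible and the $L$-packet is a singleton. Your character computation on the regular elliptic set is more ambitious, aiming at arbitrary $G$; the identity $\Theta_{St_G}|_{\mathrm{ell}}=(-1)^r$ and its match with the stable discrete series character via the Weyl denominator formula are correct and natural (this is the classical Euler--Poincar\'e picture).

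That said, the step you yourself flag as ``the main obstacle'' is a genuine gap. Your sketch that $\nu=0$ forces a singular infinitesimal character unless $P=G$ is not justified: one must rule out that some proper Levi $M$ with compact-mod-centre Cartan carries a discrete series whose Harish--Chandra parameter lies in the $W_G$-orbit of $\rho_G$, and nothing in your argument does this. The elliptic character identity cannot see such constituents, since induced tempered characters vanish on the elliptic set. Likewise, the reduction $St_G\cong\Ind_{P_f}^G(St_{M_f})$ in the non-discrete-series case is asserted but not argued. Finally, for (ii) the paper's proof of Proposition~3 is a direct computation in ${}^LG$: the centralizer of $\imath_G\circ D_2(\C^\times)$ in $\wG$ is $\wT$, and one shows $\imath_G\circ D_2(j)\cdot j$ differs from a given inversion element by something in $\wT$. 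Your parameter paragraph computes the infinitesimal character correctly, but the discreteness criterion you invoke is precisely what Proposition~3 establishes and would need that argument to be complete.
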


\begin{example} It is well-known that if $V_2$ denotes the standard 2 dimensional representation of $\SL_2(\C)$, then the 
principal $\SL_2(\C)$ inside $\GL_{n+1}(\C)$ is given by ${\rm Sym}^n(V_2)$. Later we will have occasion to use the fact
that this principal $\SL_2(\C)$ sits inside the classical group $\Sp_{2n}(\C) \subset \GL_{2n}(\C)$ 
and inside $\SO_{2n+1}(\C) \subset \GL_{2n+1}(\C)$ in the two possible cases. 

Let $D_\ell$, $\ell \geq 2$,  be the 2 dimensional irreducible representation of $W_\R$ given by
${\rm Ind}_{\C^\times}^{ W_\R} \chi_\ell$ 
where  $\chi_\ell$ is the character of $\C^\times$ given by $ z = r e^{i \theta}\stackrel{\chi_\ell}\rightarrow e^{(\ell -1)i \theta}$. We will use the symbol $D_\ell$ to also denote 
the corresponding irreducible discrete series representation of $\GL_2(\R)$ which has trivial central 
character if $\ell$ is even, and has $\omega_\R$ as its central character for $\ell$ odd. 

It can be seen that, 
\begin{eqnarray*}
{\rm Sym}^{2n-1}(D_2) & =  & D_{2n}+D_{2n-2} + \cdots + D_2 \\
{\rm Sym}^{4n-2}(D_2) & =  & D_{4n-1}+D_{4n-3} + \cdots + \omega_\R \\
{\rm Sym}^{4n}(D_2) & =  & D_{4n+1}+D_{4n-1} + \cdots + 1,
\end{eqnarray*}
a sum of distinct irreducible representations of $W_\R$, 
which by our conjecture is the Langlands parameter of the Steinberg representation of $\GL_n(\R)$ for various integers $n$.
Our conjecture
further says that the Steinberg representation of $\GL_n(\R)$ is an irreducible representation of $\GL_n(\R)$. We will
prove the assertion on the Langlands parameter of the Steinberg representation of $\GL_n(\R)$ by an induction on $n$ (increasing in steps of 2) below. We first deal with $\GL_3(\R)$ which contains the basic idea.

We will show that the principal series representation $\nu \times 1 \times \nu^{-1}$ of $\GL_3(\R)$ contains the irreducible tempered 
representation $\omega_\R \times D_3$ as a subquotient 
which being an irreducible generic representation, is the Steinberg representation of $\GL_3(\R)$, and is further a submodule 
of the principal series representation $\nu \times 1 \times \nu^{-1}$ of $\GL_3(\R)$.

To see that the principal series representation $\nu \times 1 \times \nu^{-1}$ of $\GL_3(\R)$ 
contains the irreducible tempered 
representation $\omega_\R \times D_3$ of $\GL_3(\R)$, note that the principal series representation $\nu \times 1 \times \nu^{-1}$ 
contains the representation $\nu^{1/2}D_2 \times \nu^{-1}$ which is contained in 
the principal series representation $\nu\omega_{\R} \times \omega_\R \times \nu^{-1}$.  The principal series representations 
$\nu\omega_{\R} \times \omega_\R \times \nu^{-1}$ and  $\nu\omega_{\R} \times \nu^{-1} \times \omega_\R $ have the same Jordan-H\"older factors. The principal series representation $\nu\omega_{\R} \times \nu^{-1} \times \omega_\R $ contains 
$D_3 \times \omega_\R$ as a submodule 
which is irreducible being parabolically induced from a unitary representation. Since there is a unique Whittaker model in any principal series (irrespective of whether irreducible or not), it follows that     $D_3 \times \omega_\R$ is a subquotient of the principal series representation $\nu \times 1 \times \nu^{-1}$, proving the assertion on the Steinberg.

The essence of this analysis  is that  the principal series representation 
$\nu \times 1$ contains the  discrete series $\nu^{1/2}D_2$, but this discrete series is also contained in the
principal series $\omega_\R \nu \times \omega_\R$, so a certain part of the principal series representation 
$\nu \times 1 \times \nu^{-1}$ of $\GL_3(\R)$ 
is in common with the principal series representation 
$\omega_\R \nu \times  \omega_\R \times \nu^{-1}$ 
of $\GL_3(\R)$; more precisely, the two principal series representations 
$\nu \times 1 \times \nu^{-1}$ and $\omega_\R \nu \times  \omega_\R \times \nu^{-1}$ of $\GL_3(\R)$ have the same generic
representations which appears as a unique sub-quotient in either of the two principal series representations.

The analysis of the $\GL_3(\R)$  example can be used in an inductive way to prove 
 that the Langlands parameter of the Stenberg representation
of $\GL_n(\R)$ is indeed
\begin{eqnarray*}
{\rm Sym}^{2n-1}(D_2)  & =  & D_{2n}+D_{2n-2} + \cdots + D_2, \\
{\rm Sym}^{4n-2}(D_2) & =  & D_{4n-1}+D_{4n-3} + \cdots + \omega_\R, \\
{\rm Sym}^{4n}(D_2) & =  & D_{4n+1}+D_{4n-1} + \cdots + 1.
\end{eqnarray*}

For example, suppose we are in the first of the above cases, i.e. 
 for the group 
$\GL_{2n}(\R)$ in which case we are looking at 
the principal series representation
$$\nu^{-(2n-1)/2} \times \nu^{-(2n-3)/2} \times  \cdots \times  \nu^{(2n-3)/2} \times \nu^{(2n-1)/2},$$
and we need to prove that the irreducible (generic and tempered) representation with  parameter ${\rm Sym}^{2n-1}(D_2) $
is contained in this principal series representation.

By the inductive hypothesis, the principal series representation of $\GL_{2n-2}(\R)$
$$ \nu^{-(2n-3)/2} \times  \cdots \times  \nu^{(2n-3)/2} ,$$ 
contains the tempered representation  say $\pi_{2n-2}$ with parameter
$${\rm Sym}^{2n-3}(D_2)   =   D_{2n-2} + \cdots + D_2 .$$
Therefore the principal series representation of $\GL_{2n}(\R)$
$$\nu^{-(2n-1)/2} \times \nu^{-(2n-3)/2} \times  \cdots \times  \nu^{(2n-3)/2} \times \nu^{(2n-1)/2},$$ 
contains 
the representation 

$$\nu^{-(2n-1)/2} \times \pi_{2n-2} \times \nu^{(2n-1)/2}.$$
Since the principal series representation $\nu^{-(2n-1)/2} \times \nu^{(2n-1)/2}$ of $\GL_2(\R)$ contains the discrete series representation $D_{2n-2}$, it follows that
 the principal series representation of $\GL_{2n}(\R)$
$$\nu^{-(2n-1)/2} \times \nu^{-(2n-3)/2} \times  \cdots \times  \nu^{(2n-3)/2} \times \nu^{(2n-1)/2},$$ 
contains (as a subquotient) the irreducible generic representation of $\GL_{2n}(\R)$ with parameter
$${\rm Sym}^{2n-1}(D_2)   =   D_{2n}+D_{2n-2} + \cdots + D_2.$$

Since the Steinberg representation of $\GL_{2n}(\R)$ is the unique generic component of the principal series
$\nu^{-(2n-1)/2} \times \nu^{-(2n-3)/2} \times  \cdots \times  \nu^{(2n-3)/2} \times \nu^{(2n-1)/2},$  our argument in fact proves temperedness too of the Steinberg representation.

It may be noted that there is a way to come up with the parameter of the Steinberg representation of $\GL_n(\R)$ 
since it is built (as for any other representation) from parameters for $\GL_1(\R)$ and $\GL_2(\R)$, unlike the case of $p$-adics, where the parameter
of the Steinberg representation of $\GL_n(F)$, $n \geq 2$, is a `brand-new' parameter!

\end{example} 

\begin{remark}
As observed earlier, 
\begin{eqnarray*}
{\rm Sym}^{2n-1}(D_2)
  & =  & D_{2n}+D_{2n-2} + \cdots + D_2, \\
{\rm Sym}^{4n-2}(D_2) & =  & D_{4n-1}+D_{4n-3} + \cdots + \omega_\R, \\
{\rm Sym}^{4n}(D_2) & =  & D_{4n+1}+D_{4n-1} + \cdots + 1,
\end{eqnarray*}
a sum of distinct irreducible selfdual representations of $W_\R$ (of a fixed parity). Thus
${\rm Sym}^{i}(D_2)$  
gives rise to Langlands parameters not only for $\GL_{i+1}(\R)$ but also for $\Sp_{2n}(\R)$, and $\SO(p,q)(\R)$ as the case may be. The parameter 
${\rm Sym}^{i}(D_2)$ 
being a sum of distinct irreducible selfdual representations of $W_\R$ (of a fixed parity), the 
corresponding representations of the classical group are discrete series representations with Vogan 
$L$-packets of size $2^d$ where $d$ is either the number of irreducible summands in ${\rm Sym}^{i}(D_2)$ if $i$ is odd, or  
one less than the number of irreducible summands in ${\rm Sym}^{i}(D_2)$ if $i$ is even.
 
Our conjecture for $\Sp_{2n}, \SO_{2n+1}$ has the following two consequences:

\begin{enumerate}

\item The Steinberg representation for the groups $\Sp_{2n}(\R)$, 
and $\SO(p,q)(\R)$ ($p+q$ odd), is a 
discrete series representation.

\item The Steinberg representation for the groups $\Sp_{2n}(\R)$ is a sum of $2^n$ many irreducible discrete 
series representations of $\Sp_{2n}(\R)$ making up a full $L$-packet of size $|W_G/W_K|= 2^n$.

\item The Steinberg representation for the groups $\SO(p,q)(\R)$ 
($p+q$ odd) is a sum of $\frac{(p+q)!}{p! q!}$ 
many irreducible discrete 
series representations of $\SO(p,q)(\R)$ 
making up a full $L$-packet of size $|W_G/W_K|= \frac{(p+q)!}{p! q!}$.
\end{enumerate}

\end{remark}

\begin{proposition} Let $G$ be a semi-simple  group over $\R$. Let 
$D_2: W_\R\rightarrow \SL_2(\C)$ be the Langlands parameter 
associated to the lowest discrete series representation of $\PGL_2(\R)$. Let $\imath_G: \SL_2(\C) \rightarrow {}^LG$ 
be the Jacobson-Morozov map
associated to a regular unipotent element in $\widehat{G}$.
Then $\imath_G \circ D_2: W_\R \rightarrow {}^LG$ is a discrete parameter 
if and only if 
$G(\R)$ has a discrete series representation (a condition which depends only on the {\it inner-class} of $G$ over $\R$, equivalently only on the $L$-group of $G$).
\end{proposition}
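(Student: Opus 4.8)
The plan is to reduce both sides of the asserted equivalence to a single combinatorial condition on the based root datum of $G$: that the longest Weyl element $w_0$ and the inner-class automorphism $\vartheta$ satisfy $\vartheta=-w_0$ on $X^*(T)\otimes\Q$. First I would recall that the parameter $\varphi=\imath_G\circ D_2:W_\R\to{}^LG$ is discrete exactly when its centralizer $S_\varphi=Z_{\wG}(\operatorname{im}\varphi)$ is finite modulo $Z(\wG)^\Gamma$; since $G$ is semisimple, $Z(\wG)$ is finite, so discreteness is equivalent to $S_\varphi^\circ=1$.

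Next I would compute $S_\varphi^\circ$. Writing $z=re^{i\theta}\in\C^\times\subset W_\R$, the parameter $D_2$ sends $z$ to $\operatorname{diag}(e^{i\theta},e^{-i\theta})$ in the diagonal torus of $\SL_2(\C)$ and sends $j$ to $\left(\begin{smallmatrix}0&1\\-1&0\end{smallmatrix}\right)$. Choosing $\imath_G$ compatibly with a $\Gamma$-stable pinning $(\wT,\widehat B,\{X_{\alpha_i}\})$, the composite carries $\C^\times$ into the one-parameter subgroup $S=\{(2\rho^\vee)(t)\}\subset\wT$, where $\rho^\vee$ is the half-sum of the positive coroots of $\wG$. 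Because $\langle\alpha,2\rho^\vee\rangle=2\operatorname{ht}(\alpha)\neq0$ for every root $\alpha$ of $\wG$, no root space is centralized and the connected centralizer of $\varphi(\C^\times)=S$ is exactly $\wT$. Now $\varphi(j)=n_0\rtimes\sigma$, where $\sigma=\widehat\vartheta$ is the pinned action of complex conjugation on $\wG\subset{}^LG=\wG\rtimes\Gamma$ and $n_0=\imath_G\left(\begin{smallmatrix}0&1\\-1&0\end{smallmatrix}\right)$; since $\operatorname{Ad}(n_0)$ inverts the regular coweight $2\rho^\vee$ and the stabilizer of $2\rho^\vee$ in $W$ is trivial, $n_0$ represents the unique Weyl element with this property, namely $w_0$. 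Hence $S_\varphi^\circ=(\wT^{w_0\vartheta})^\circ$, and (under the canonical identification $X_*(\wT)\otimes\Q=X^*(T)\otimes\Q$, for which $w_0$ and $\widehat\vartheta$ become $w_0$ and $\vartheta$ for $G$) the parameter $\varphi$ is discrete if and only if $w_0\vartheta$ has no nonzero fixed vector on $X^*(T)\otimes\Q$.

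I would then convert both conditions into $\vartheta=-w_0$. Using $\operatorname{Aut}(\text{root datum})=W\rtimes\operatorname{Out}$, write the opposition involution $\iota:=-w_0\in\operatorname{Out}$ and set $\psi:=\iota\vartheta$; since $\Gamma$ has order two we have $\vartheta^2=1$, so $\psi$ is again a diagram automorphism and an involution, and $w_0\vartheta=-\psi$. A nontrivial involutive permutation of the simple roots contains a transposition, hence has eigenvalue $-1$, so $-\psi$ fixes a nonzero vector; while $\psi=1$ gives $w_0\vartheta=-1$. Thus $\varphi$ is discrete if and only if $\psi=1$, i.e. $\vartheta=-w_0$. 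On the other side, by Harish-Chandra $G(\R)$ has discrete series iff $\operatorname{rank}G=\operatorname{rank}K$, equivalently iff some maximal torus of $G$ over $\R$ is anisotropic, equivalently iff complex conjugation acts by $-1$ on its character lattice; since the conjugation on any maximal $\R$-torus has the shape $w\vartheta$ with $w\in W$, such a torus exists in the inner class iff $-1\in W\vartheta$, i.e. $-\vartheta\in W$, which by the same $W\rtimes\operatorname{Out}$ bookkeeping holds iff $\psi=1$, i.e. $\vartheta=-w_0$. The two conditions coincide, and the resulting condition depends only on $\vartheta$, hence only on the inner class, equivalently on ${}^LG$.

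I expect the principal obstacle to be the structure-theoretic step: pinning down that existence of discrete series is governed by $-\vartheta\in W$ and is genuinely constant across the inner class (rather than varying with the particular real form), together with the careful bookkeeping of the semidirect product ${}^LG=\wG\rtimes\Gamma$ — in particular verifying $\sigma=\widehat\vartheta$ and that $n_0$ represents $w_0$ — so that the parameter-side and group-side combinatorial conditions provably agree.
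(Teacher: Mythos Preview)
Your proposal is correct and takes a genuinely different route from the paper's.

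The paper records two facts: (i) a parameter $\varphi$ is discrete iff the centralizer of $\varphi(\C^\times)$ is a maximal torus $\wT$ and $\varphi(j)$ acts on $\wT$ by inversion; (ii) $G(\R)$ has discrete series iff some element $g_0\cdot j\in{}^LG$ normalizes $\wT$ and acts on it by $t\mapsto t^{-1}$. Condition (i) for $\varphi=\imath_G\circ D_2$ is checked via regularity of $2\rho^\vee$, exactly as you do. For the direction ``discrete series $\Rightarrow$ discrete parameter'' the paper then argues directly: both $g_0$ and $\imath_G(D_2(j))$ invert the one--parameter subgroup $\imath_G\circ D_2(\S^1)$ (the former because $g_0\cdot j$ inverts $\wT$ and the pinned Galois action fixes $2\rho^\vee$), so their quotient centralizes this subgroup and hence lies in $\wT$; therefore $\imath_G(D_2(j))\cdot j$ and $g_0\cdot j$ differ by an element of $\wT$, and the former inverts $\wT$ as well. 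The converse is not written out; it is immediate from (i) and (ii).

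Your approach instead names the Weyl element: $\imath_G(D_2(j))$ represents $w_0$ because it inverts the regular coweight $2\rho^\vee$. This reduces discreteness to $(\wT^{\,w_0\vartheta})^\circ=1$, and via the opposition involution $\iota=-w_0$ and $\psi=\iota\vartheta$ to the clean condition $\vartheta=-w_0$. You then match this with Harish-Chandra's criterion on the group side, rewritten as $-1\in W\vartheta$, which by the same $W\rtimes\operatorname{Out}$ bookkeeping is again $\vartheta=-w_0$. What you gain is an explicit combinatorial characterization, a symmetric treatment of both implications, and a visible reason for inner--class invariance. What the paper gains is brevity: it never names $w_0$ or $-w_0$, and replaces your eigenvalue argument for $-\psi$ by the one--line ``differ by an element of $\wT$'' trick. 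The structure--theoretic point you flag as the principal obstacle---that the existence of an anisotropic maximal torus (equivalently $-1\in W\vartheta$) is genuinely constant across the inner class---is precisely the content of the paper's criterion (ii), which is stated there without proof; so on that step the two arguments are on equal footing.
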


\begin{proof}
We will prove that if $G(\R)$ has a discrete series representation then 
$\imath_G \circ D_2: W_\R \rightarrow {}^LG$ is a discrete parameter.

Recall that a parameter $\varphi: W_\R \rightarrow {} ^LG$ 
is said to be a discrete parameter if and only if 
the centralizer of $\varphi(W_\R)$ in $\widehat{G}$ is a finite group. It is easy to see that a parameter  $\varphi: W_\R \rightarrow {}^LG$ 
is a discrete parameter if and only if 
\begin{enumerate}
\item the centralizer of $\varphi(\C^\times)$ is a maximal torus $\widehat{T}$ in $\widehat{G}$, and 

\item for $j \in W_\R$ (the element outside $\C^\times$ in $W_\R$ with $j^2=-1$), the action of $\varphi(j)$ on $\widehat{T}$ 
is by $t\rightarrow t^{-1}$.
\end{enumerate}
Conversely, a group $G$ has a discrete series representation if there is an element in the normalizer 
of  the standard maximal torus $\wT$ in ${}^LG$  which acts by $t\rightarrow t^{-1}$ on $\wT$.

It is well-known and easy to see that for the principal $\SL_2(\C)$, $\imath_G: \SL_2(\C) \rightarrow {}^LG$, 
the centralizer in $\wG$ of the image of the diagonal torus of $\SL_2(\C)$ under $\imath_G$, and hence also 
the  centralizer in $\wG$ of the image of the subgroup of the diagonal torus of $\SL_2(\C)$ consisting of $\S^1=\{z \in \C^\times | |z|=1\}$ 
under $\imath_G$ equals $\wT$. 

We will re-write  ${}^LG=\wG \rtimes W_\R$, as ${}^LG=\wG \rtimes \langle j\rangle$, thus we give the symbol $j$ two (similar) 
roles, one as an element of $W_\R$, and the other as the corresponding element of ${}^LG$. 

To prove the proposition, it suffices to prove that the element $\imath_G\circ D_2(j)
\in {}^LG = \wG \rtimes W_\R$
which normalizes $\imath_G\circ D_2(\C^\times) \subset \wG$, hence normalizes its centralizer in $\wG$ which is $\wT$, acts on
$\wT$ by $t\rightarrow t^{-1}$. 

Since $G$ is given to have a discrete series representation, by what was recalled earlier, there is an element $g_0\cdot j 
\in {}^LG = \wG \rtimes W_\R$ 
which acts as $t\rightarrow t^{-1}$ on $\wT$. 
The element $\imath_G\circ D_2(j)$ 
acts as
$t\rightarrow t^{-1}$ on  $\imath_G\circ D_2(\S^1)$, 
which implies that the element $\imath_G\circ D_2(j) g_0^{-1}$ 
acts by identity  on $\imath_G\circ D_2(\S^1)$, and hence belongs to $\wT$, say $\imath_G\circ D_2(j)= g_0 t_0$ for 
some $t_0\in \wT$. This means that the two elements of ${}^LG = \wG \rtimes W_\R$ given by 
$g_0\cdot j$ and $\imath_G\circ D_2(j) \cdot j$ 
differ by an element ($j^{-1}t_0j$) in $\wT$ since:
$$ \imath_G\circ D_2(j) \cdot j = g_0t_0j= g_0 j j^{-1}t_0j.$$
The element $g_0\cdot j$ is given to act
as $t\rightarrow t^{-1}$ on $\wT$ so is the case for $\imath_G\circ D_2(j) \cdot j$ too, 
proving the proposition. 
 \end{proof}

\section {Paraphrasing the work of Speh} 
We now recall the thesis work of B. Speh [Sp] on reducibility of principal series representations of $\GL_n(\R)$, and then 
rephrase it using a language of segments similar to that due to Bernstein-Zelevinsky for $\GL_n(F)$, for $F$, $p$-adics.
The work of Speh is not published; we will follow the exposition of Moeglin [Mo] on Speh's work closely.

For $j=1,\cdots, t$, let $n_j=1$ or 2  be integers, with $\sum n_j = n$, and $s_j \in \C$. If $n_j=1$, fix also a character $\sigma_j$ of order 1 or 2. If $n_j =2$, fix an integer $p_j \geq 1$, and let $\sigma_j$ be the discrete series representation 
contained in the reducible principal series representation $\nu^{p_j/2} \times \nu^{-p_j/2}\omega^{p_j+1}_\R$.

Write $\underline{\chi}$ for the collection of triples,
$$\underline{\chi} = \{(n_j,s_j,\sigma_j), j = 1,\cdots ,t\}.$$
 
Denote by $I(\underline{\chi})$ 
the representation of $\GL_n(\R)$ induced from a parabolic subgroup of $\GL_n(\R)$ with Levi subgroup which is 
$\GL_{n_1}(\R) \times \cdots \times \GL_{n_t}(\R)$ of the representation:
$$\bigotimes \sigma_j \otimes \nu^{s_j}.$$

\begin{theorem} If $\underline{\chi}$ is as above, $I(\underline{\chi})$ is irreducible if and only if for all 
$i,j \in [1,t]$ with $n_i \geq n_j$, either $s_i-s_j \not \in \frac{1}{2}\Z$, 
or the appropriate one of the following conditions 
is satisfied (thus we are also assuming that $s_i-s_j \in \frac{1}{2}\Z$):
\begin{enumerate}
\item If $n_i=n_j=1$, $|s_i-s_j|$ is not an even 
(resp. odd integer)  nonzero integer if and only if $\sigma_i \not = \sigma_j$ (resp $\sigma_i = \sigma_j$).

\item If $n_i=2$ and $n_j=1$ (so that $p_i$ is defined), $-p_i/2+ |s_i-s_j| \not \in \{1,2,3,\cdots \}$. 

\item If $n_i=n_j=2$ (so that $p_i, p_j$ are defined), $-|p_i-p_j|/2+ |s_i-s_j| \not \in \{1,2,3,\cdots \}$. 
\end{enumerate}
\end{theorem}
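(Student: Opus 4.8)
The plan is to prove the criterion by the classical rank-one reduction, turning the multi-factor irreducibility statement into the three pairwise computations encoded in conditions (1)--(3). Since a factor with $s_i-s_j\notin\frac12\Z$ cannot link to the others, I may assume throughout that each relevant difference $s_i-s_j$ is real and half-integral, and order the factors so that $\Re(s_1)\geq\cdots\geq\Re(s_t)$, presenting $I(\underline{\chi})$ as a standard module. By the Langlands quotient theorem for $\GL_n(\R)$, $I(\underline{\chi})$ is irreducible if and only if the long standard intertwining operator $M$ from $I(\underline{\chi})$ to the representation induced from the reversed data is an isomorphism.

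First I would factor $M$, via the standard cocycle relation, into a composite of elementary intertwining operators, each transposing two adjacent essentially discrete series factors $\sigma_i\nu^{s_i}$ and $\sigma_j\nu^{s_j}$. By induction in stages each such elementary operator is, up to normalization, the standard intertwining operator for the two-factor representation $\sigma_i\nu^{s_i}\times\sigma_j\nu^{s_j}$ of $\GL_{n_i+n_j}(\R)$, with $n_i+n_j\in\{2,3,4\}$, and it is an isomorphism exactly when that two-factor representation is irreducible. Granting that these elementary operators act independently (see below), $I(\underline{\chi})$ is irreducible if and only if every pair $\sigma_i\nu^{s_i}\times\sigma_j\nu^{s_j}$ is irreducible; this is the reduction carried out in Speh's thesis and in Moeglin's exposition. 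It then remains to compute the three pairwise reducibilities. For $n_i=n_j=1$ the pair is a $\GL_2(\R)$ principal series, and the Jacquet--Langlands criterion (Proposition 1) applies: writing $\sigma_i\sigma_j^{-1}=\omega_\R^{a_i+a_j}$, reducibility forces $\chi_1\chi_2^{-1}(t)=t^p\omega_\R(t)$ with $p=s_i-s_j$ a nonzero integer, hence the parity relation $a_i+a_j\equiv p+1\pmod 2$; this is reducible precisely when $|s_i-s_j|$ is a nonzero odd integer for $\sigma_i=\sigma_j$ and a nonzero even integer for $\sigma_i\neq\sigma_j$, which is condition (1). For $n_i=2$ I would realize the discrete series factor $\sigma_i$ as the essentially discrete series component of $\nu^{s_i+p_i/2}\times\nu^{s_i-p_i/2}\omega_\R^{p_i+1}$, exactly as in the $\GL_3(\R)$ example already treated; using $\sigma_i\otimes\omega_\R\cong\sigma_i$ shows that the reducibility is insensitive to the sign character on the other factor, and the computation collapses to the rank-one $\GL_2(\R)$ reducibilities among the principal-series constituents, i.e. to whether $[s_j]$ (resp. the segment of $\sigma_j\nu^{s_j}$) extends the segment $[s_i-p_i/2,s_i+p_i/2]$ beyond an endpoint by a positive integer. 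This yields $|s_i-s_j|-p_i/2\in\Z_{>0}$ for (2) and $|s_i-s_j|-|p_i-p_j|/2\in\Z_{>0}$ for (3).

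The hard part will be the independence claim used in the reduction: that the elementary operators obtained from distinct transpositions have disjoint singular behaviour, so that no pole arising from one pair is cancelled by a zero from another. This is precisely the delicate point of Speh's argument. The mechanism that makes it work for $\GL_n(\R)$ is that the infinitesimal character---the weak replacement for cuspidal support noted in the remarks---together with the explicit $\GL_2(\R)$ reducibility of Proposition 1 determines the location of each elementary operator's poles and zeros separately; I would make this precise by checking, along the relevant one-parameter complex family in the $s_j$, that the singular sets of the elementary factors are disjoint, so that holomorphy and non-vanishing of $M$ reduce to that of each factor. Verifying this non-interaction, rather than the individual $\GL_2(\R)$, $\GL_3(\R)$ and $\GL_4(\R)$ reducibility computations, is where the substance of the proof lies.
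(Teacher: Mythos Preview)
The paper does not give a proof of this theorem at all: it is stated as a result due to Speh's thesis [Sp], following Moeglin's exposition [Mo], and immediately afterward the paper simply rephrases it in segment language (Theorem 2) without argument. So there is no proof in the paper to compare your proposal against.

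That said, your outline is essentially the standard argument one finds in Speh's thesis and Moeglin's write-up: factor the long intertwining operator into adjacent transpositions, reduce each elementary factor by induction in stages to a two-block induced representation on $\GL_2(\R)$, $\GL_3(\R)$ or $\GL_4(\R)$, and then compute those three cases directly. You have also correctly isolated where the real work lies, namely in showing that a pole of one elementary factor cannot be killed by a zero of another. Your sketch of how to handle this (separating the singular loci of the elementary factors along a one-parameter family) is the right idea but is not yet a proof; in Speh's argument this step is carried out by an explicit analysis of the normalized operators and their $\Gamma$-factors, and it is genuinely the substance of the theorem rather than a formality. If you intend to turn this into a self-contained proof you will need to make that non-cancellation argument precise, not merely assert that the singular sets are disjoint.
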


It is hoped that the following paraphrase of the previous theorem is simpler to use.

\begin{theorem} 
Consider a principal series representation  $Ps(\underline{\pi}) = \pi_1 \times  \cdots  \times  \pi_t$  of $\GL_n(\R)$
where $\pi_i$ are essentially discrete series representations on $\GL_{n_i}(\R)$ (thus $n_i \leq 2$). 
Assume that the segments associated to the representations $\pi_i$ are $[a_i,b_i]$. 
Then, $Ps(\underline{\pi})$ is irreducible if and only if for all pairs 
$i,j \in [1,t]$ with $i \not = j$ (and assuming that we are not in the case that $n_i=n_j=1$, a case corresponding to 
reducibility of principal series handled by Proposition 1), 
 \begin{enumerate}
\item  $a_i-a_j \not \in \Z$. Or,

\item $a_i-a_j \in \Z$, and one  of the two segements:  
$[a_i,b_i], [a_j,b_j]$  is contained in the other.
\end{enumerate}
\end{theorem}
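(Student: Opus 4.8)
The plan is to deduce this paraphrase directly from the preceding theorem of Speh--Moeglin, by checking that the two irreducibility criteria agree pair by pair. Since both statements assert that $Ps(\underline{\pi})$ is irreducible exactly when a local condition holds for every pair $i\neq j$, it suffices to prove, for each pair, that the segment condition (1)--(2) is equivalent to the corresponding clause of the earlier theorem. First I would fix the dictionary between a segment and the data $(n_j,s_j,\sigma_j)$: writing $[a_j,b_j]$ for the segment of $\pi_j$, one has $s_j=(a_j+b_j)/2$ and $p_j=b_j-a_j\ge 0$, with $n_j=1$ (a character) corresponding to the degenerate segment $b_j=a_j$, i.e.\ $p_j=0$, and $n_j=2$ to $p_j\ge 1$. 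The crucial bookkeeping observation is that the sign character $\sigma_j$ attached to an $n_j=1$ factor is \emph{not} recorded by the segment (the segment $[s_j]$ pins the character down only on $\R^{>0}$); this is exactly why clause (1) of the earlier theorem, the only clause in which $\sigma_i,\sigma_j$ intervene, cannot be phrased through segments and must be set aside --- it is the case $n_i=n_j=1$ excluded here and governed instead by Proposition~1.

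For every remaining pair (at least one of $n_i,n_j$ equal to $2$) I would treat cases (2) and (3) of the earlier theorem uniformly: substituting $p_j=0$ into clause (3) reproduces clause (2), so in both cases the pair \emph{fails} the earlier irreducibility condition precisely when
\[
-\tfrac{1}{2}|p_i-p_j|+|s_i-s_j|\in\{1,2,3,\dots\}.
\]
Now set $u=a_i-a_j$ and $v=b_i-b_j$. Then $s_i-s_j=(u+v)/2$ and $p_i-p_j=v-u$, so the left-hand side becomes $\tfrac12\big(|u+v|-|u-v|\big)=\operatorname{sgn}(uv)\,\min(|u|,|v|)$. This expression is manifestly symmetric under $i\leftrightarrow j$ (which sends $u\mapsto -u$, $v\mapsto -v$), so the ordering convention $n_i\ge n_j$ of the earlier theorem plays no role. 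The key integrality point is that $v-u=p_i-p_j\in\Z$, whence $u\in\Z\iff v\in\Z$; consequently $\operatorname{sgn}(uv)\min(|u|,|v|)$ is a positive integer if and only if $u,v\in\Z$ and $uv>0$.

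It then remains to translate these two conditions back into the language of segments: $u,v\in\Z$ is the same as the alignment $a_i-a_j\in\Z$, and for aligned segments $uv>0$ says exactly that $u,v$ have the same strict sign, i.e.\ that \emph{neither} of $[a_i,b_i],[a_j,b_j]$ is contained in the other (the containment $[a_j,b_j]\subseteq[a_i,b_i]$ is $u\le 0\le v$, and the reverse containment is $v\le 0\le u$, so ``one contains the other'' is $uv\le 0$). Hence the pair fails the earlier irreducibility condition if and only if $a_i-a_j\in\Z$ and neither segment contains the other, which is precisely the negation of (1)--(2). Assembling the clauses over all pairs gives the theorem. I expect the main obstacle to be exactly this integrality bookkeeping: verifying that the ``half-integer'' hypothesis $s_i-s_j\in\frac12\Z$ of the earlier theorem is automatically subsumed (it follows from the displayed membership, since $|p_i-p_j|\in\Z$), and checking the degenerate boundaries $u=0$ or $v=0$ (shared endpoints, where one segment does contain the other and both criteria correctly report irreducibility), rather than any deep representation theory, since all of that is already packaged in Speh's theorem.
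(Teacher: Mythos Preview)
Your argument is correct: the substitution $u=a_i-a_j$, $v=b_i-b_j$ together with the identity $\tfrac12(|u+v|-|u-v|)=\operatorname{sgn}(uv)\min(|u|,|v|)$ and the parity constraint $v-u=p_i-p_j\in\Z$ does exactly what is needed to match clauses (2)--(3) of Speh's theorem with the segment containment condition, and the exclusion of the case $n_i=n_j=1$ is appropriately delegated to Proposition~1. The paper itself offers no proof of this theorem; it is simply presented as a ``paraphrase'' of Theorem~1, so your verification that the two formulations are equivalent is precisely the omitted check, carried out along the only natural route.
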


\begin{remark} The paraphrase of Speh's theorem above in terms of segements says that for essentially 
discrete series representations $\pi_1$ and $\pi_2$ of $\GL_{n_1}(\R)$ and $\GL_{n_2}(\R)$, for the representation  $\pi_1 \times \pi_2$ 
of $\GL_{n_1+n_2}(\R)$ to be irreducible, either the segments are not integrally related, or 
one of the segments is contained in the other. 
Thus the essential difference with the BZ criterion for $p$-adic fields  is that 
for $p$-adic, for $\pi_1 \times \pi_2$ is irreducible if  either  (a) one of the segments is  contained in the other, or (b) the segments are disjoint and not linked, whereas for $F=\R$, it is only (a) which is responsible for irreducibility. 
\end{remark}

\section{A partial order on the set of irreducible representations}
 Let $G$ be a reductive algebraic group over a local field $F$. Introduce a relation $\geq$  on the set of irreducible admissible representations 
of $G(F)$, 
by saying
that $\pi_1 \geq  \pi_2$ if $\pi_1$ appears as the JH factor of the standard
module in which $\pi_2$
 is a Langlands quotient. In the case of $\GL_n(F)$, since there is a bijective correspondence between 
irreducible admissible representations of $\GL_n(F)$, and $n$-dimensional (admissible) representations of $W'_F$, there is a 
partial order on the set  of $n$-dimensional Langlands parameters too. Here is a most natural question which we do not know 
how to answer.

\vspace{4mm}

\begin{conjecture}
The relation introduced above is an equivalence relation, i.e., if $\pi_1 \geq \pi_2$ and $\pi_2 \geq \pi_3$, then $\pi_1 \geq \pi_3$.
\end{conjecture}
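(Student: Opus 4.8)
The plan is to recast the relation $\geq$ in terms of the Jordan--Hölder content of standard modules and to show that this content is governed by a genuine partial order on parameters, after which transitivity becomes automatic. Write $M(\pi)$ for the standard module having $\pi$ as its Langlands quotient, and let $\mathrm{JH}(M(\pi))$ denote its set of Jordan--Hölder constituents, so that by definition $\pi_1 \geq \pi_2$ exactly when $\pi_1 \in \mathrm{JH}(M(\pi_2))$. The first step is the standard fact that the passage between the basis of standard modules and the basis of irreducibles in the Grothendieck group is unitriangular with respect to the closure (Bruhat) order $\preceq$ on Langlands parameters: if $\pi_1 \in \mathrm{JH}(M(\pi_2))$ then $\gamma_{\pi_1} \succeq \gamma_{\pi_2}$, with equality only for $\pi_1 = \pi_2$, which occurs with multiplicity one. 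This already shows that $\geq$ is contained in the partial order $\succeq$; in particular $\geq$ is antisymmetric, so the relation is in truth an order relation rather than an equivalence relation, and the substantive content of the statement is exactly its transitivity.

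The key reduction is the observation that transitivity of $\geq$ follows from the statement that $\mathrm{JH}(M(\pi))$ is the \emph{full} principal up-set $\{\pi' : \gamma_{\pi'} \succeq \gamma_\pi\}$; I will call this \emph{full support} of the decomposition matrix. Indeed, if full support holds then $\pi_2 \in \mathrm{JH}(M(\pi_3))$ forces $\gamma_{\pi_2} \succeq \gamma_{\pi_3}$ and hence $\{\gamma \succeq \gamma_{\pi_2}\} \subseteq \{\gamma \succeq \gamma_{\pi_3}\}$, which is precisely $\mathrm{JH}(M(\pi_2)) \subseteq \mathrm{JH}(M(\pi_3))$; transitivity is then immediate. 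Thus the whole problem is to prove that every parameter lying below $\gamma_\pi$ in the closure order actually contributes a constituent to $M(\pi)$.

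For $\GL_n$ over a local field I would establish full support by reducing the multiplicities $[M(\pi):\pi']$ to Kazhdan--Lusztig data of type $A$. In the $p$-adic case this is Zelevinsky's theory: standard modules and irreducibles are indexed by multisegments, the closure order is the dominance order, and the multiplicities are values at $1$ of type $A$ Kazhdan--Lusztig polynomials. In the archimedean case the analogue is furnished by Speh's classification, paraphrased in Section 4 above, together with the Lusztig--Vogan polynomials for the real form, which for $\GL_n(\R)$ and $\GL_n(\C)$ remain of type $A$. The decisive input is then purely Lie-theoretic: a Kazhdan--Lusztig polynomial $P_{x,y}$ has constant term $1$ whenever $x \preceq y$, and its coefficients are nonnegative, so $P_{x,y}(1) \geq 1$ for every comparable pair. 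Feeding this back, every parameter below $\gamma_\pi$ occurs with strictly positive multiplicity, which is full support and hence transitivity for $\GL_n$.

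The main obstacle is the general reductive group. There the multiplicity of an irreducible in a standard module is extracted from the \emph{inverse} of the character matrix, built from Lusztig--Vogan polynomials on the poset of $K$-orbits with local systems rather than from a single value $P_{x,y}(1)$; unlike category $\mathcal{O}$, these polynomials can vanish for comparable parameters, so full support may genuinely fail and the clean argument above breaks down. I expect the real difficulty to be isolating the precise combinatorial closure property of the support pattern that still guarantees $\mathrm{JH}(M(\pi_2)) \subseteq \mathrm{JH}(M(\pi_3))$ whenever $\pi_2 \in \mathrm{JH}(M(\pi_3))$ --- a property weaker than full support but sufficient for transitivity --- and to verify it within the Adams--Barbasch--Vogan framework. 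A sensible order of attack is therefore to settle $\GL_n$ completely as above, then the quasisplit classical groups where the dual geometry is best understood, and to treat the general case, or to search for a counterexample, only once the shape of the support pattern in these cases is understood.
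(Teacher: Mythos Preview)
The paper offers no proof of this statement: it is posed as an open \emph{conjecture}, and the author explicitly says he does not know how to answer it. So there is no proof to compare your proposal against; the relevant question is whether your proposal actually establishes the claim.

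Your reduction is sound: if the Jordan--H\"older set $\mathrm{JH}(M(\pi))$ coincides with the full up-set of $\gamma_\pi$ in the closure order, then transitivity of $\geq$ is immediate. The gap is that you do not prove full support in any of the cases you consider. For $p$-adic $\GL_n$ you invoke the fact that type $A$ Kazhdan--Lusztig polynomials have constant term $1$; this is correct, but you must first identify the multiplicities $[M(\pi):\pi']$ with values $P_{x,y}(1)$ for an honest Bruhat pair in a symmetric group (Zelevinsky's conjecture, proved via the geometry of nilpotent orbit closures), and you must check that the closure order you use is literally that Bruhat order. You assert both without argument or reference. For $\GL_n(\R)$ the gap is larger: the relevant polynomials are Kazhdan--Lusztig--Vogan polynomials attached to $K$-orbits with local systems on the flag variety, and your claim that these ``remain of type $A$'' and hence are automatically nonzero for comparable parameters is neither precise nor justified --- indeed, the paper itself notes that even multiplicity-one for the regular principal series of $\GL_n(\R)$ is not obviously available. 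Finally, for general $G$ you concede that full support can fail and propose to look for a weaker closure property; that is a reasonable research plan, but it is not a proof.

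In short, your proposal is a coherent strategy whose logical skeleton (full support $\Rightarrow$ transitivity) is correct, but every substantive step --- the identification of multiplicities with KL data, the nonvanishing of the archimedean KLV polynomials, and anything at all for general $G$ --- is left as an unproved assertion. The conjecture remains open after your proposal, just as it is in the paper.
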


\section{Principal series representations for $\GL_3(\R)$, $\GL_4(\R)$}
\begin{lemma} Let $a,b,c$ be three complex numbers lying on a $\Z$-line, i.e., $(a-b) \in \Z$ as well as $(b-c) \in \Z$. Assume 
that $(a-b) \in \N=\Z^{>0}$ and $(b-c) \in \N=\Z^{>0}$. Then a principal series 
representation $[a,b] \times [c]$ of $\GL_3(\R)$ 
induced from an essentially discrete series representation of $(2,1)$ parabolic
is of length 2, with the generic subquotient the irreducible representation $[a,c] \times [b]$ 
which is an irreducible representation; here $[b]$ denotes the unique character of $\R^\times$ which is $\nu^b$ on $\R^+$, and is determined on all of 
$\R^\times$ by the equality of the central characters of the representations $[a,b] \times [c]$ and
$[a,c] \times [b]$ of $\GL_3(\R)$.  
\end{lemma}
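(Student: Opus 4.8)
The plan is to establish three assertions in turn: that $[a,c]\times[b]$ is irreducible, that it is the unique generic subquotient of $[a,b]\times[c]$, and that $[a,b]\times[c]$ has length exactly $2$. Both representations are induced from generic data on the $(2,1)$-Levi $\GL_2(\R)\times\GL_1(\R)$ (a discrete series times a character), hence each is generic and, by uniqueness of the Whittaker model, has a \emph{unique} irreducible generic subquotient. For the irreducibility of $[a,c]\times[b]$ I would apply the paraphrased Speh criterion (Theorem 3): the inducing segments are $[a,c]$ and the singleton $[b]$, and since $a-b,\,b-c\in\N$ the point $b$ lies strictly between $c$ and $a$, so $\{b\}\subset[a,c]$ and the two segments are integrally related; as the pair is of type $(2,1)$ the exceptional case $n_i=n_j=1$ does not occur, and condition (2) gives irreducibility. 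The same criterion shows $[a,b]\times[c]$ is reducible, since $c<b$ places the singleton $\{c\}$ below the segment $[a,b]$, so neither segment contains the other; thus $[a,b]\times[c]$ has length at least $2$.

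To identify the generic subquotient I would run the chasing argument of the $\GL_3(\R)$ Steinberg computation of Section 3. Realize $[a,b]$ as the irreducible submodule of a reducible $\GL_2(\R)$ principal series $\chi_a\times\chi_b$, where $\chi_a,\chi_b$ have exponents $a,b$ on $\R^{>0}$ and the $\omega_\R$-twists are chosen, as permitted by Proposition 1, so that the finite-dimensional constituent $F_{a,b}$ is the quotient. Inducing the resulting short exact sequence to $\GL_3(\R)$ gives
$$0\longrightarrow [a,b]\times[c]\longrightarrow \chi_a\times\chi_b\times\chi_c\longrightarrow F_{a,b}\times[c]\longrightarrow 0,$$
with $\chi_c=[c]$. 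Using that discrete series are $\omega_\R$-invariant (the Remark after Proposition 2), I would fix the twists so that the \emph{same} $\chi_a$ also realizes $[a,c]$ as the submodule of $\chi_a\times\chi_c$; then $[a,c]\times[b]$ is a subquotient of $\chi_a\times\chi_c\times\chi_b$, which has the same Jordan--H\"older factors as $\chi_a\times\chi_b\times\chi_c$ since reorderings of a principal series do. Hence the factors of $\chi_a\times\chi_b\times\chi_c$ are those of $[a,b]\times[c]$ together with those of $F_{a,b}\times[c]$, and $[a,c]\times[b]$ occurs among them. Since $F_{a,b}$, being finite-dimensional, has no Whittaker model, the quotient $F_{a,b}\times[c]$ has none either, so the generic representation $[a,c]\times[b]$ must occur in the submodule $[a,b]\times[c]$; by Whittaker uniqueness it is its unique generic subquotient. (The central characters of $[a,b]\times[c]$ and $[a,c]\times[b]$ agree automatically, which is exactly the condition pinning down the sign of $[b]$.)

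For the length, note that the central exponent $(a+b)/2$ of $[a,b]$ exceeds the exponent $c$ of $[c]$, so $[a,b]\times[c]$ is a standard module; it therefore has a unique irreducible quotient $J$ (its Langlands quotient), occurring with multiplicity one, and $J$ is non-generic because for a reducible generic standard module the generic constituent is a proper submodule (already visible for $\nu^{1/2}\times\nu^{-1/2}$ of $\GL_2(\R)$, whose Langlands quotient is the non-generic finite-dimensional representation). It then remains to exclude a third constituent. Here I would compare the two regroupings of $\chi_a\times\chi_b\times\chi_c$, which yield the virtual-representation identity
$$[a,b]\times[c]+F_{a,b}\times[c]=[a,c]\times[b]+F_{a,c}\times[b],$$
so that, $[a,c]\times[b]$ being irreducible, the non-generic part of $[a,b]\times[c]$ equals $F_{a,c}\times[b]-F_{a,b}\times[c]$; length $2$ is then equivalent to this difference being the single irreducible $J$, which I would verify by computing the constituents of the two finite-dimensionally induced terms via the geometric lemma (Jacquet modules) for the Levi $\GL_2(\R)\times\GL_1(\R)$.

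The identification of the generic subquotient is the clean, geometric heart of the argument and follows the Section~3 method essentially verbatim. The genuine difficulty is the \emph{exactness of the length}: the criterion of Theorem 3 detects only reducibility, not the number of constituents, so ruling out an intermediate factor requires the finer input above — either the Jacquet-module computation showing the relevant semisimplified Jacquet module is a sum of exactly two characters, or an appeal to the detailed structure in Speh's original theorem (Theorem 2) to confirm that $J$ and $[a,c]\times[b]$ are the only constituents. This is the step I expect to be the main obstacle.
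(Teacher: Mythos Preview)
Your identification of the generic subquotient follows essentially the same route as the paper: both embed $[a,b]\times[c]$ and $[a,c]\times[b]$ into the same full principal series $\chi_a\times\chi_b\times\chi_c$, invoke Speh's criterion (Theorem~2/Theorem~3) to see that $[a,c]\times[b]$ is irreducible, and use uniqueness of the Whittaker model to conclude that it is the generic constituent of $[a,b]\times[c]$. Your version is in fact more careful than the paper's: the short exact sequence and the observation that $F_{a,b}\times[c]$ is non-generic make explicit \emph{why} the generic piece must land in the submodule $[a,b]\times[c]$ rather than in the complement, a step the paper leaves implicit.

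The point worth flagging is that the paper's own proof does \emph{not} address the assertion that the length is exactly~$2$. Its argument stops once $[a,c]\times[b]$ is exhibited as a subquotient; no further constituent count is given. So the difficulty you isolate is genuine, and your proposed attack (the Grothendieck-group identity $[a,b]\times[c]+F_{a,b}\times[c]=[a,c]\times[b]+F_{a,c}\times[b]$ followed by a Jacquet-module comparison of the two finite-dimensionally-induced terms) already goes beyond what the paper supplies. One caution: for that identity to hold literally you need the characters $\chi_a,\chi_b,\chi_c$ chosen so that \emph{both} $\chi_a\times\chi_b$ and $\chi_a\times\chi_c$ are reducible, which forces a specific $\omega_\R$-pattern on $\chi_b,\chi_c$; you should then check that this $\chi_c$ agrees with the given $[c]$ and that the resulting $\chi_b$ is the $[b]$ singled out by the central-character condition in the statement (it does, but it is worth saying). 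Modulo that bookkeeping, your plan is sound and strictly more complete than the paper's proof.
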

\begin{proof} We will use standard methods of Whittaker model, well-known for $p$-adic fields, and surely also known 
for  archimedean fields.

The principal series representation $[a,b] \times [c]$ 
of $\GL_3(\R)$ is a sub-quotient of the principal series representation
$a \times b \times c$, 
and the generic component of $a \times b \times c$ is contained inside $[a,b] \times c$. However,
the generic component of $a \times b \times c$ is also contained inside $[a,c]  \times b$ which by our assumptions on $a,b,c$ 
and the theorem of Speh
is an irreducible representation of $\GL_3(\R)$.
Therefore the principal series reprepresentation $[a,b] \times [c]$ must contain $[a,c] \times [b]$ as a sub-quotient.
\end{proof}

The following conjecture is inspired by conversations  with C. Moeglin and D. Renard. Given that it is a rather explicit 
decomposition of a principal series representation of $\GL_4(\R)$, it is surprising that it is not proved by anyone, or may be it is actually proven by Speh in her thesis as she does prove many results of this nature, but it is difficult to interpret her results in the form that we want here.

\begin{conjecture} Let $\pi_1 = [a,b]$ be the essentially discrete series representation of $\GL_2(\R)$ corresponding to the representation $\sigma_{a,b}$ of $W_\R$ 
whose restriction to $\C^\times$ contains the character $z\rightarrow z^a\bar{z}^b$ for complex numbers $a,b$ with $a-b \in \N$. Similarly 
let $\pi_2 = [c,d]$ be an essentially discrete series representation of $\GL_2(\R)$. 

\begin{enumerate}
\item Assume that $(a-c) \in \N$ with $(a-c) > (a-b)$, thus with
$$a> b>c>d.
$$
(An inequality between complex numbers with the same imaginary parts, whose real parts have this inequality.) 

Then the principal series representation $\pi_1 \times \pi_2$ of $\GL_4(\R)$ has length 5 whose Jordan-H\"older factors 
have  the following Langlands parameters, each
appearing with multiplicity 1:
\begin{enumerate}
 \item $\sigma_{a,b} + \sigma_{c,d}$.
\item $\sigma_{a,c} + \sigma_{b,d}$.
\item $\sigma_{a,d}+ \sigma_{b,c}$.
\item $\sigma_{a,d} + \nu^{b}\omega_\R + \nu^c$ 
if $b-c$ is an even integer, else  $\sigma_{a,d} + \nu^{b} + \nu^c$.

\item $\sigma_{a,d} + \nu^{b} + \nu^c\omega_\R$ if $b-c$ is an even integer, else  $\sigma_{a,d} + \nu^{b} \omega_\R + 
\nu^c \omega_\R$. 
\end{enumerate}

\item If the segments $[a,b]$ and $[c,d]$ are linked but not juxtaposed, so assuming without loss of generality that 
$(a-c) \in \N$ with 
$(a-b)\geq (a-c)$, then the principal series representation 
$[a,b] \times [c,d]$ of $\GL_4(\R)$ 
has length 2. Assuming that $(a-c) \in \N$, the Langlands parameter
of the generic component of the principal series representation $[a,b] \times [c,d]$ of $\GL_4(\R)$ is 
$$\sigma_{a,d}+ \sigma_{b,c}.$$
 \end{enumerate}
\end{conjecture}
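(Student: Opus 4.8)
The plan is to work throughout in the Grothendieck group of finite-length representations, reducing everything to the degenerate principal series $\chi_a\times\chi_b\times\chi_c\times\chi_d$ induced from the Borel, where $\chi_x$ denotes the character equal to $\nu^x$ on $\R^{>0}$ with a sign to be pinned down later, and exploiting that permuting the inducing characters does not change the class of a full principal series. The first and most robust step is to locate the generic constituent, exactly as in the proof of Lemma 1. Since $a>b>c>d$ lie on a single $\Z$-line, the segment $[b,c]$ is contained in $[a,d]$, so by the segment form of Speh's theorem (Theorem 2) the representation $[a,d]\times[b,c]$ is irreducible; being parabolically induced from (generic) discrete series it is generic, and it is a subquotient of $\chi_a\times\chi_d\times\chi_b\times\chi_c$, which has the same Jordan--H\"older constituents as $\chi_a\times\chi_b\times\chi_c\times\chi_d$. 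As the latter contains $[a,b]\times[c,d]$ and the Whittaker model of a principal series is unique, the unique generic constituent of $[a,b]\times[c,d]$ must be $[a,d]\times[b,c]$, with parameter $\sigma_{a,d}+\sigma_{b,c}$; this is constituent (c) in part (1) and the asserted generic component in part (2).

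For the remaining constituents of part (1) I would compute the full class of $[a,b]\times[c,d]$ by writing $[c,d]=\chi_c\times\chi_d-F_{c,d}$ and $[a,b]=\chi_a\times\chi_b-F_{a,b}$ in the Grothendieck group (Proposition 1), so that $[a,b]\times[c,d]$ becomes an integer combination of $\chi_a\times\chi_b\times\chi_c\times\chi_d$, $F_{a,b}\times\chi_c\times\chi_d$, $\chi_a\times\chi_b\times F_{c,d}$ and $F_{a,b}\times F_{c,d}$. Each mixed term is then decomposed in stages: an inner $\GL_3$-induction such as $[a,b]\times\chi_c$ is resolved by Lemma 1 into its generic piece $[a,c]\times[b]$ and a Langlands quotient with parameter $\sigma_{a,b}+\chi_c$, while the inner $\GL_3$-inductions that Speh's criterion declares irreducible (for instance $[a,c]\times[b]$, in which the singleton $[b]$ is nested inside $[a,c]$) are left intact; inducing the resulting $\GL_3$-constituents up by the final character and splitting once more via Lemma 1 produces the list of irreducible $\GL_4$-constituents. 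Matching the exponent multiset $\{a,b,c,d\}$ of the infinitesimal character to the admissible Langlands parameters then shows that only the three pairings into two two-dimensional parameters and the single pairing $\{a,d\}$ with $b,c$ as one-dimensional parameters can occur, which is exactly the list (a)--(e).

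The sign characters $\omega_\R$ that distinguish (d) from (e) I would determine not by chasing signs through the inductions but by the constraint of Remark 3: the determinant of a Langlands parameter equals the central character, and all constituents of $[a,b]\times[c,d]$ share the central character $\det\sigma_{a,b}\cdot\det\sigma_{c,d}$. For the two constituents with parameter $\sigma_{a,d}+\chi_b+\chi_c$ this forces $\chi_b\chi_c=\det\sigma_{a,b}\det\sigma_{c,d}(\det\sigma_{a,d})^{-1}$, a character of the definite form $t^{b+c}\omega_\R^{\epsilon}(t)$; the two admissible sign splittings $(\chi_b,\chi_c)$ of this product, whose parity is governed by whether $b-c$ is even or odd, are precisely the two options recorded in (d) and (e). Part (2) is the easier, length-two case: the nested rearrangement again yields the generic constituent $[a,d]\times[b,c]$, Speh's theorem guarantees that $[a,b]\times[c,d]$ is reducible, and the Grothendieck-group computation, now involving only the single overlap, yields the Langlands quotient with parameter $\sigma_{a,b}+\sigma_{c,d}$ as the only other constituent, so the length is exactly two.

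The step I expect to be the main obstacle is proving that the length in part (1) is \emph{exactly} five with all multiplicities one, rather than merely exhibiting these five constituents. The real-place phenomenon that disjoint integrally related segments already force reducibility, in contrast to the $p$-adic case (cf.\ the remark after Theorem 2), means one cannot import Zelevinsky's combinatorics wholesale; the bookkeeping of the three Grothendieck-group correction terms, together with the $\omega_\R$-sign ambiguities, must be controlled carefully to rule out both extra constituents and higher multiplicities. A cleaner alternative, which I would pursue in parallel, is to compute the Kazhdan--Lusztig--Vogan multiplicities for the integral Weyl group $S_4$ on this block and verify directly that the character of $[a,b]\times[c,d]$ is the sum of the five asserted irreducible characters.
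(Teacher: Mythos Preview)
The paper does not prove this statement: it is labelled as a conjecture, and the text immediately preceding it says explicitly that ``it is surprising that it is not proved by anyone, or may be it is actually proven by Speh in her thesis \ldots\ but it is difficult to interpret her results in the form that we want here.'' The concluding remark reinforces this, noting that the author's original conclusions on subquotients for $\GL_4(\R)$ were ``much off the mark'' and that experts suggested there may be no simple answer short of Kazhdan--Lusztig--Vogan polynomials. So there is no proof in the paper to compare your proposal against.

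As to your sketch itself: the identification of the generic constituent via uniqueness of Whittaker models and Speh's nesting criterion is exactly the argument the paper gives for Lemma~1, and that part is sound. The Grothendieck-group bookkeeping you outline is the natural approach, but the gap you yourself flag is the real one. Writing $[a,b]\times[c,d]$ as an alternating sum of inductions does not by itself give the length, because you must know the full decomposition of each summand --- and the full Borel principal series $\chi_a\times\chi_b\times\chi_c\times\chi_d$ already has (as the paper notes) eleven Jordan--H\"older factors in the degenerate case, so decomposing it is not easier than the original problem. Your staged $\GL_3$-then-$\GL_4$ reduction via Lemma~1 runs into the same issue: once you induce a Langlands quotient such as $L(\sigma_{a,b}+\chi_c)$ by a further character, that induced representation is again reducible and you need its constituents, which Lemma~1 does not supply. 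The central-character argument pins down the \emph{product} $\chi_b\chi_c$ but does not by itself prove that both sign choices actually occur as separate constituents. Your proposed fallback --- a direct KLV computation on the $S_4$ block --- is precisely what the experts the paper alludes to would say is required, and is not carried out here.
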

\begin{conjecture}
Consider a standard module  $Ps(\underline{\pi}) = \pi_1 \times  \cdots  \times  \pi_t$  of $\GL_n(\R)$
where $\pi_i$ are essentially discrete series representations of $\GL_{n_i}(\R)$ (thus $n_i \leq 2$) with Langlands 
parameters $\sigma_i$. 
Then an irreducible admissible representation $\pi$ of $\GL_n(\R)$ appears in $Ps(\underline{\pi})$ as a 
Jordan-H\"older factor if and only if its Langlands parameter 
$\sigma_\pi$ is obtained from  $\tau_0= \sigma_1+ \sigma_2 + \cdots + \sigma_n$ by a sequence of operations starting with $\tau_0$ in which one goes from $\tau_i$ to $\tau_{i+1}$ by replacing a summand of $\tau_i$ of the form $\sigma$ by $\sigma'$ for 
$\dim \sigma = \dim \sigma' \leq 4$, and $\sigma' \geq \sigma$.
\end{conjecture}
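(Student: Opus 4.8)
The plan is to reformulate the statement so that it becomes a claim about the partial order $\geq$ on Langlands parameters, and then to prove that this order is generated by its restriction to parameters differing in dimension at most $4$. The first observation is that, with the $\pi_i$ arranged in Langlands (decreasing-exponent) order, $Ps(\underline{\pi})$ is precisely the standard module whose unique irreducible quotient has parameter $\tau_0=\sigma_1+\cdots+\sigma_t$. By the very definition of the relation $\geq$, an irreducible $\pi$ with parameter $\tau$ therefore occurs as a Jordan--H\"older factor of $Ps(\underline{\pi})$ if and only if $\tau\geq\tau_0$. Thus the conjecture is equivalent to the assertion that the transitive closure of the elementary moves (replace a sub-parameter $\sigma$ of dimension $\leq 4$ by some $\sigma'\geq\sigma$) is exactly the order $\geq$ restricted to parameters $\geq\tau_0$. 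Since each segment has dimension $1$ or $2$, a sub-parameter of dimension $\leq 4$ involves at most two segments, so the admissible moves are precisely the reducibilities of products of two segments, i.e. the $\GL_2(\R)$, $\GL_3(\R)$ and $\GL_4(\R)$ situations recorded above in the Jacquet--Langlands reducibility result, in the Lemma for $\GL_3(\R)$, and in the conjectured decomposition for $\GL_4(\R)$.

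For the implication ``reachable $\Rightarrow$ Jordan--H\"older factor'' I would argue as follows. Each elementary move $\sigma\mapsto\sigma'$ is, by construction, an instance of $\sigma'\geq\sigma$ for representations of $\GL_m(\R)$ with $m\leq 4$, and these instances are supplied by the low-rank base cases. The key auxiliary step is compatibility of the order with parabolic induction: if $\sigma'\geq\sigma$ and $\rho$ is any further parameter, then $\sigma'+\rho\geq\sigma+\rho$. This follows from exactness of parabolic induction together with the fact that, for $\GL_n(\R)$, reordering the inducing data does not change the set of Jordan--H\"older factors; one brings the two segments being modified adjacent, applies the base-case move inside the corresponding $\GL_{\leq 4}(\R)$-block, and induces back up. Chaining the moves $\tau_0\to\tau_1\to\cdots\to\tau_k=\tau$ then gives $\tau_{i+1}\geq\tau_i$ at each step, and transitivity of $\geq$ (conjectured above) yields $\tau\geq\tau_0$, i.e. $\pi$ occurs in $Ps(\underline{\pi})$.

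The substantive direction is ``Jordan--H\"older factor $\Rightarrow$ reachable'', which I would prove by induction on the number $t$ of segments. Writing $Ps(\underline{\pi})=\pi_1\times Ps'$ with $Ps'=\pi_2\times\cdots\times\pi_t$, exactness of induction gives $[\pi_1\times Ps']=\sum_{\pi'}[\pi_1\times\pi']$ in the Grothendieck group, the sum over Jordan--H\"older factors $\pi'$ of $Ps'$; since all multiplicities are non-negative there is no cancellation, so any factor $\pi$ of $Ps(\underline{\pi})$ is a factor of some $\pi_1\times\pi'$. By the inductive hypothesis the parameter $\tau'$ of $\pi'$ is reachable from $\tau_0'=\sigma_2+\cdots+\sigma_t$, and by the compatibility step these same moves carry $\tau_0=\sigma_1+\tau_0'$ to $\sigma_1+\tau'$. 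It then remains to show that every Jordan--H\"older factor of a product $\pi_1\times\pi'$ of a single segment with an irreducible is reachable from $\sigma_1+\tau'$ by moves involving $\sigma_1$ together with one segment of $\tau'$ at a time. Realizing $\pi'$ as the Langlands quotient of the ordered product of its segments $\rho_1\times\cdots\times\rho_r$ and commuting the segment $\pi_1$ past them one at a time, this reduces to the explicit decompositions of (segment)$\times$(segment) in $\GL_3(\R)$ and $\GL_4(\R)$, with reducibility governed by the paraphrase of Speh's theorem.

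The main obstacle is exactly this last reduction: proving that the Jordan--H\"older factors of (segment)$\times$(irreducible) are \emph{generated} by the pairwise moves, with no factor requiring a genuinely higher-rank reducibility. This is the real-group analogue of the Bernstein--Zelevinsky theory of derivatives, where the corresponding statement for $p$-adic $\GL_n$ is established using the Jacquet--Zelevinsky functors; over $\R$ one would need a substitute, for instance translation functors and wall-crossing in the relevant block of category $\mathcal{O}$ (in the spirit of Vogan duality), or a direct Jacquet-module/Whittaker argument generalizing the $\GL_3(\R)$ Lemma. I would also flag that, as presented, the argument is conditional on two inputs stated above as conjectures, namely transitivity of $\geq$ and the precise $\GL_4(\R)$ decomposition; a fully unconditional proof must either establish these directly or replace the combinatorial bookkeeping by Kazhdan--Lusztig--Vogan character theory, at the cost of losing the elementary, segment-theoretic flavour that is the point of the conjecture.
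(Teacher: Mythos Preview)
The paper does not prove this statement: it is labeled a \emph{Conjecture} and is followed only by the one-line gloss that it ``amounts to say that any JH factor of a standard module for $\GL(n,\R)$ can be seen by analysing the JH components for $\GL(m,\R)$ for $m\leq 4$.'' There is therefore no paper-proof to compare your attempt against.

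Your proposal is not a proof either, and to your credit you say so explicitly. As you note, the argument is conditional on (i) transitivity of the relation $\geq$, which is the paper's Conjecture~2 and is left open there; (ii) the explicit Jordan--H\"older decomposition of a product of two linked segments in $\GL_4(\R)$, which is the paper's Conjecture~3 and is likewise open; and (iii) the step you call the ``main obstacle'', that every JH factor of (segment)$\times$(irreducible) is reachable by moves touching $\pi_1$ and only one segment of the irreducible at a time. Item (iii) is a genuine structural claim for which there is at present no archimedean analogue of the Bernstein--Zelevinsky derivative calculus; the paper itself remarks that Jacquet-module techniques are ``not as developed for real groups''. So what you have written is a heuristic reduction of one conjecture to three others, not a strategy that could be completed by routine work.

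One further gap you pass over: in the ``compatibility with parabolic induction'' step, to conclude $\sigma'+\rho\geq\sigma+\rho$ from $\sigma'\geq\sigma$ you need the irreducible with parameter $\sigma'+\rho$ to occur as a subquotient of $\pi(\sigma')\times\pi(\rho)$, not merely of $M(\sigma')\times M(\rho)$. Equivalently you are assuming that the Langlands-quotient construction is multiplicative, $\pi(\tau_1+\tau_2)\hookrightarrow_{\rm ss}\pi(\tau_1)\times\pi(\tau_2)$. This holds for $p$-adic $\GL_n$ and is expected over $\R$, but it is an additional input, not a formal consequence of exactness and reordering of inducing data.
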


(The conjecture amounts  to say that any JH factor of a standard module for $\GL(n,\R)$ can be seen by analysing the JH components for $\GL(m,\R)$ for $m \leq 4$.)

\section{An Example}

According to Bernstein-Zelevinsky and Zelevinsky, the simplest principal series representations to analyze for $\GL_n(F)$, $F$ a 
$p$-adic field, is the principal series representation $Ps(\chi)$ 
induced from a character $\chi: (F^\times)^n \rightarrow \C^\times$ with the property
$\chi^w \not = \chi$ for $w \not = 1$. These principal series representations 
have Jordan-H\"older factors of multiplicity 1 
and can be explicitly described in terms of 
Zelevinsky classification, cf. [Ze]. Also, the 
Langlands parameters of all subquotients 
can be explicitly described, see for instance [Ku]. The corresponding questions for $\GL_n(\R),\GL_n(\C)$ 
have to my knowledge not been
attempted, although one does know --- by the thesis work of B. Speh [Sp] --- 
exactly when a principal series representation of $\GL_n(\R),\GL_n(\C)$ is reducible. For an exposition of the work of Speh, see [Mo]. For $p$-adic fields,
Langlands parameters of all subquotients of a principal series representation $Ps(\chi)$ are intimately linked with the notion of {\it Weil-Deligne} group
$W'_F = W_F \times \SL_2(\C)$. This group is not available to us for real groups. The Langlands parameters of the 
subquotients of the simplest principal series 
representation realized on functions of $G(\R)/B(\R)$ do not seem to be known. 

\vspace{2mm}

The space of functions on $\GL_{n}(F)/B(F)$ 
is the principal series representation which is 
$\nu^{-(n-1)/2} \times \nu^{-(n-3)/2} \times  \cdots \times  \nu^{(n-3)/2} \times \nu^{(n-1)/2}$. 
If $F$ is  $p$-adic, this principal series representation has $2^{n-1}$ many Jordan-H\"older factors,  each appearing with multiplicity 1. The
Langlands parameters of these Jordan-H\"older factors are obtained by 
writing the interval $[{-(n-1)/2},{-(n-3)/2}, \cdots,  {(n-3)/2}, {(n-1)/2}]$ as disjoint union 
of $r$ many non-empty intervals for all $0\leq r \leq n$. (These Jordan-H\"older factors are parametrized by all parabolics $P$ 
containing a given Borel subgroup $B$, and are given by a 
Steinberg-like construction 
by considering functions on $G/P$ modulo functions on $G/Q$ for all parabolics $Q$ strictly containing $P$. 
Irreducibility of such representations is a theorem of Casselman, cf. [Ca].) 

For each such interval $[i,i+1,\cdots, i+r_i]$, we have the representation
of the Weil-Deligne group $W_F \times \SL_2(\C)$ which is given by $\nu^{r_i/2+ i} \boxtimes {\rm Sym}^{r_i}(\C^2)$, 
and 
the Langlands parameter of the corresponding 
representation of $\GL_{n}(F)$ to be: 
$$\sum \nu^{r_i/2+ i} \otimes {\rm Sym}^{r_i}(\C^2).$$

What about $\GL_{n}(\R)$? Note that unlike the $p$-adic case, the number of 
Jordan-H\"older factors of this principal series representation of $\GL_n(\R)$ is much more complicated
for $n \geq 4$, for example there are 11 Jordan-H\"older factors of  this principal series representation of $\GL_4(\R)$.
However, it is not clear if all is lost in going from $p$-adics to reals, for example I could not find out if 
the principal series 
 representations $Ps(\chi)$ 
induced from a character $\chi: (F^\times)^n \rightarrow \C^\times$ with the property
$\chi^w \not = \chi$ for $w \not = 1$ have
Jordan-H\"older factors of multiplicity 1 (which for $p$-adic fields 
is a consequence of calculations with the Jacquet-module which is not as developed for real groups).

\vspace{4mm}

\noindent{\bf Concluding remark:} The first version of this paper was submitted to the arXiv last summer. It was much off the mark especially in my conclusions on subquotients of a standard module for $\GL_4(\R)$, and was naturally received with much sarcasm by the experts who opined that perhaps there is no simple answer to possible Jordan-H\"older factors of 
principal series representations being related to Kazhdan-Lusztig-Vogan polynomials. That may be the case, still
 I am happy to raise some very obvious questions, and also to 
propose some obvious answers for them, even if they are still off the mark.

\vspace{.5cm}
\noindent
Dipendra Prasad

\noindent
Tata Insititute of Fundamental Research,
Colaba,
Mumbai-400005, INDIA.

\noindent
e-mail: {\tt dprasad@math.tifr.res.in}

\end{document}